\documentclass[10pt]{amsart}

\usepackage[utf8]{inputenc}
\usepackage[left=4 cm, right= 4 cm]{geometry}
\usepackage{lmodern}
\usepackage[english,activeacute]{babel}
\usepackage{empheq}
\usepackage{xcolor}
\usepackage{amsmath}
\usepackage{amsthm}
\usepackage{amssymb}
\usepackage{calligra,mathrsfs}
\usepackage{enumitem}
\usepackage{extpfeil}
\usepackage{graphics}
\usepackage[all]{xy}
\usepackage{hyperref}
\usepackage{bookmark}
\usepackage{bbm}

\newtheorem{theorem}{Theorem}[section]
\newtheorem{corollary}[theorem]{Corollary}
\newtheorem{lemma}[theorem]{Lemma}
\newtheorem{prop}[theorem]{Proposition}

\newtheorem{pregunta}[theorem]{Question}
\newtheorem{conjetura}[theorem]{Conjecture}

\newtheorem{ejemplo}[theorem]{Example}
\newtheorem{comentario}[theorem]{Remark}

\theoremstyle{definition}
\newtheorem{definicion}[theorem]{Definition}

\numberwithin{equation}{section}

\newcommand*{\myproofname}{Proof of the Lemma:}

\newcommand*{\myproofteo}{Proof of the Theorem:}

\newcommand*{\myproofprop}{Proof of the Proposition:}

\newcommand{\Q}{\mathbb{Q}}

\newcommand{\Z}{\mathbb{Z}}

\newcommand{\Ps}{\mathbb{P}}

\newcommand{\NS}{\mathrm{NS}}
\newcommand{\Pic}{\mathrm{Pic}}

\newcommand*{\shom}{\mathscr{H}\kern -.5pt om}
\newcommand*{\shext}{\mathscr{E}\kern -.5pt xt}

\newcommand*{\shtor}{\mathscr{T}\kern -.5pt or}

\begin{document}

\title{Seshadri constants and hyperelliptic curves on abelian varieties}
\author{Nelson Alvarado}
\address{ Departamento de Matemáticas, Facultad de Ciencias, Universidad de Chile, Las Palmeras 3425,  Santiago\\Chile}
\email{nelson.alvarado@ug.uchile.cl}

\begin{abstract}
 Given a principally polarized abelian variety, we give a sufficient condition for the van Geemen-van der Geer locus $\Gamma_{00}$ to be positive dimensional. Along the way, we prove a sharp Castelnuovo-type inequality for hyperelliptic curves on abelian varieties, and characterize the cases in which we have equality. These results are consequences of a general relation between Seshadri constants and the surjectivity of high order Gau\ss-Wahl maps on curves inside abelian varieties, which we also prove. Applying such relation to Abel-Jacobi curves we obtain new results about the surjectivity of higher Gau\ss-Wahl maps on smooth curves, sharpening a previous result of Bertram, Ein and Lazarsfeld. In the process, we discuss further questions and conjectures. 
\end{abstract}

\maketitle

\section{Introduction}

A classical problem in the theory of complex abelian varieties is to find characterizations of polarized Jacobians of smooth curves among all principally polarized abelian varieties (\cite{gru-survey}). Nowadays, even though there are many such characterizations (e.g \cite{Mats}, \cite{DPC} or \cite{GV-min-GV}), there are also many famous conjectures that remain unsolved. One of such conjectures is the $\Gamma_{00}$-conjecture of van Geemen and van der Geer (\cite{vgvdg}). Concretely, for an indecomposable principally polarized abelian variety $(A,\theta)$ and $\Theta$ a symmetric divisor representing $\theta,$ this conjecture predicts that the base locus $\Gamma_{00}(A,\theta)$ of the linear system 
$$|I_{0}^{4}(2\Theta)| = \{D\in|2\Theta| : \mathrm{mult}_{0}(D) \geq 4\},$$
where $0\in A$ is the origin of the group-law, has positive dimension if and only if $(A,\theta)$ is the polarized Jacobian of a smooth projective curve (see \cite[2.a)]{BD-theta}, for a more geometric interpretation in terms of \emph{fausse tris\'ecantes} of the Kummer variety). 

As shown in \cite{grushevski-gamma00} and \cite[Théorème 2]{BD-theta}, the $\Gamma_{00}$-conjecture holds under additional genericity hypothesis. That is, for a very general principally polarized abelian variety (p.p.a.v.) we have $\Gamma_{00}(A,\theta) = \{0\}.$ In contrast, in this article we show that the presence of low-degree hyperelliptic \footnote{For this we mean an irrational curve that admits a $2:1$ map to $\Ps^{1}.$ In particular, in this article elliptic curves are considered to be hyperelliptic} curves in $(A,\theta)$ ensures that $\Gamma_{00}(A,\theta)$ has positive dimension. Concretely, for $k,m\in\Z_{>0}$ we can consider the \emph{continuous} linear systems
$$\{I_{0}^{k}(m\Theta)\}:=\{D\equiv_{\mathrm{num}} m\Theta : \mathrm{mult}_{0}D \geq k\}.$$
For example, we obviously have that the base locus $\Gamma_{00}^{\mathrm{num}}(A,\theta)$ of the continuous system $\{I_{0}^{4}(2\Theta)\}$ is contained in $\Gamma_{00}(A,\theta).$ In this setting, our result is the following:
\begin{theorem}
\label{A}
Let $(A,\theta)$ be an indecomposable principally polarized abelian variety. Let $k,m$ be positive integers and assume that there exists a smooth hyperelliptic curve $C\subset A$ such that $2m(\theta\cdot C)< k(1+g(C))$. Then $$\dim \mathrm{Bs}(\{I_{0}^{k}(m\Theta)\}) > 0.$$
In particular, if there exists a smooth hyperelliptic curve with $(\theta\cdot C)\leq g(C),$ then 
$$\dim \Gamma_{00}(A,\theta)\geq \dim \Gamma_{00}^{\mathrm{num}}(A,\theta) > 0.$$
\end{theorem}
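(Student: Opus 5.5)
The plan is to produce an explicit curve through the origin that lies in every member of $\{I_{0}^{k}(m\Theta)\}$, namely a suitable translate of $C$. I have not found a complete argument for the key degree estimate; see the third paragraph.

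\emph{Normalization.} First translate $C$ so that a Weierstrass point $w_0$ of $C$ sits at $0$. The inclusion $C\subset A$ factors through $J(C)$ via an Abel--Jacobi map based at $w_0$ followed by a homomorphism. On $J(C)$ the hyperelliptic involution $\iota$ acts as $-1$. Hence $(-1)_A$ preserves $C$ and restricts to $\iota$ on it. The other Weierstrass points $w_1,\dots,w_{2g+1}$ are then fixed by $-1$, so they are $2$-torsion points of $A$ lying on $C$. Translation does not change $\theta\cdot C$, so the hypothesis $2m(\theta\cdot C)<k(1+g)$ with $g=g(C)$ is unaffected.

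\emph{Reduction to a degree count on $C$.} Let $D\equiv_{\mathrm{num}} m\Theta$ with $\mathrm{mult}_0D\ge k$. Replace $D$ by $E=D+(-1)^*D\equiv 2m\Theta$. Then $E$ is symmetric, $\mathrm{mult}_0E\ge 2k$, and $E\supset C$ if and only if $D\supset C$, because $C$ is $(-1)$-stable. Suppose $C\not\subset E$. Since $(-1)_A$ preserves both $E$ and $C$ and restricts to $\iota$, the divisor $E|_C$ is $\iota$-invariant of degree $2m(\theta\cdot C)$. It therefore equals $\pi^*F$ plus a combination of Weierstrass points, where $\pi\colon C\to\Ps^1$ is the double cover. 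The target inequality to derive is
$$\deg E|_C=2m(\theta\cdot C)\ \ge\ k(g+1),$$
which contradicts the hypothesis and forces $C\subset D$ for every such $D$. This would give $\dim\mathrm{Bs}(\{I_0^k(m\Theta)\})\ge 1$.

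\emph{Main obstacle: getting the factor $g+1$.} Vanishing to order $2k$ at the single point $0$ only yields $2m(\theta\cdot C)\ge 2k$, far from $k(g+1)$. The missing vanishing has to come from the remaining $2g+1$ Weierstrass points. My first attempt uses the translates $t_{w_i}^*D$: each is still numerically $m\Theta$ and has multiplicity $\ge k$ at the $2$-torsion point $w_i$ on $C$. The plan is to build from the whole continuous family a divisor numerically $\equiv$ a fixed multiple of $\Theta$ with multiplicity $\ge k$ at every $w_i$. This needs translation-invariance of the numerical class together with parity of multiplicities of symmetric divisors at $2$-torsion points. Restricting such a divisor to $C$, its degree would be at least $k(2g+2)$ against $2m(\theta\cdot C)$ per unit. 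That is exactly the required ratio $k(g+1)\le 2m(\theta\cdot C)$, since the weight is distributed over the $g+1$ points of $\Ps^1$ below the $2g+2$ Weierstrass points.

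If this bookkeeping fails, I would fall back on the Seshadri-constant relation the abstract announces, taking as the relevant bound
$$\varepsilon(\theta;0)\le\frac{2(\theta\cdot C)}{g+1}.$$
Note that this form is itself conjectural. I derived it only as the value that would make the hypothesis $2m(\theta\cdot C)<k(1+g)$ equivalent to $k/m>\varepsilon(\theta;0)$, and I have no independent derivation of the factor $2/(g+1)$. If the bound holds, the statement should follow from the general principle that $k/m>\varepsilon$ forces a positive-dimensional base locus of $\{I_0^k(m\Theta)\}$.

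\emph{The ``in particular'' part.} For $k=4$, $m=2$ the hypothesis reads $4(\theta\cdot C)<4(1+g)$, i.e.\ $(\theta\cdot C)\le g$, which is what the second claim assumes. The inclusion $\Gamma_{00}^{\mathrm{num}}\subset\Gamma_{00}$ holds because $|I_0^4(2\Theta)|\subset\{I_0^4(2\Theta)\}$. Together these give $\dim \Gamma_{00}(A,\theta)\ge \dim \Gamma_{00}^{\mathrm{num}}(A,\theta)>0$.
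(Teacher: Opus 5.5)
Your proposal does not prove the statement, because the one estimate that carries all the content is left open. The fallback assumes $\varepsilon(\theta)\le 2(\theta\cdot C)/(g+1)$. That is exactly Theorem~\ref{B}, the main result of the paper, so as written the proposal assumes what has to be proved.

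The other steps are fine. Going from $\varepsilon(\theta)<k/m$ to a positive-dimensional base locus is Debarre's Lemma (Proposition~\ref{lema-deb}) applied to a subvariety computing $\varepsilon(\theta)$, which is positive-dimensional by Campana--Peternell. Your treatment of the ``in particular'' part with $k=4$, $m=2$ is also correct.

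In the paper the missing bound is obtained indirectly:
\begin{enumerate}
\item For hyperelliptic $C$, Bertram--Ein--Lazarsfeld give $e_p(C)=(p+1)(g+1)+g-1$.
\item Serre vanishing and Proposition~\ref{sobreyectividad-subvariedades} transfer this non-surjectivity on $C$ to non-surjectivity of $\gamma^p_{m(p)L,m(p)L}$ on $A$.
\item Theorem~\ref{surjGW} then forces $\beta_p(\theta)\ge m(p)/2$.
\item Letting $p\to\infty$ in \eqref{seshadri-limit} gives the bound.
\end{enumerate}

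Your direct route breaks at the bookkeeping step, and the reason is structural. Any combination of the translates $t_{w_i}^*D$ has a class that grows at the same rate as the number of points where you control multiplicity. For example, $\sum_i t_{w_i}^*D\equiv(2g+2)m\Theta$ has multiplicity $\ge k$ at $2g+2$ points of $C$, so restricting to $C$ only returns $m(\theta\cdot C)\ge k$. Parity of multiplicities at $2$-torsion points does not change this ratio. Moreover, the curve such an argument can place in the base locus is not $C$ itself.

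The missing idea is to pull back by the isogeny instead of translating. Since $2_A$ is \'etale, $\mathrm{mult}_w(2_A^*D)=\mathrm{mult}_0D\ge k$ at \emph{every} $w\in A[2]$, while $2_A^*D\equiv 4m\Theta$. Your normalization is exactly what is needed here, because it puts the $2g+2$ Weierstrass points of $C$ in $A[2]$.

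If $C\not\subset 2_A^*D$, then $4m(\theta\cdot C)=(2_A^*D\cdot C)\ge k(2g+2)$, contradicting $2m(\theta\cdot C)<k(g+1)$. Hence $2_A(C)\subset D$ for every $D\in\{I_0^k(m\Theta)\}$, and the curve $2_A(C)$ lies in the base locus. This proves the statement directly, without Seshadri constants or Debarre's Lemma.

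The same curve also gives Theorem~\ref{B}. With $\delta=\deg(C\to 2_A(C))$, one has $\theta\cdot 2_A(C)=4(\theta\cdot C)/\delta$ and $\mathrm{mult}_0\, 2_A(C)\ge (2g+2)/\delta$, since each $2$-torsion point of $C$ contributes a smooth branch through $0$.
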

This result means, in particular, that either:
\begin{enumerate}
\item The $\Gamma_{00}$-conjecture is false in general, or
\item For any indecomposable p.p.a.v. $(A,\theta)$ and any smooth hyperelliptic curve $C$ contained in $A$ we have $(\theta\cdot C)\geq g(C).$ Moreover, equality would hold if and only if $(A,\theta)$ is the polarized Jacobian of an hyperelliptic curve $D$ (possibly $D\not\simeq C$). This would be an unexpected hyperelliptic refinement of the well known Ran's inequality (\cite[Theorem 3]{Ran}).
\end{enumerate}

Now, turning to a more detailed exposition, we first stress the fact that our proof of Theorem \ref{A} is not direct (indeed, it would be interesting to have a direct proof) and that along the way we obtain independent results about the geometry of hyperelliptic curves inside abelian varieties. Concretely, we use Debarre's observation \cite{deb}, which relates the locus $\Gamma_{00}^{\mathrm{num}}$ (or, more generally, the loci $\mathrm{Bs}(\{I_{0}^{k}(m\Theta)\}$) with the Seshadri constant $\varepsilon(A,\theta)$ of the polarized variety $(A,\theta)$ (we refer to Section 2 for its precise definition). More precisely, in \cite[Remark 2]{deb}, it is observed that $\Gamma_{00}^{\mathrm{num}}(A,\theta)$ is positive-dimensional as soon as $\varepsilon(A,\theta) < 2.$ In this context, our actual result (which already implies Theorem \ref{A}) is the following:
\begin{theorem}
\label{B}
Let $(A,\theta)$ be a polarized abelian variety and $C\subset A$ a smooth hyperelliptic curve contained in $A.$ We have that 
$$\varepsilon(\theta)\hspace{0.1cm}\leq\hspace{0.1cm} \dfrac{2(\theta\cdot C)}{g(C)+1},$$
where $\varepsilon(\theta)$ is the Seshadri constant of $\theta.$
\end{theorem}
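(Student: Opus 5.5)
The plan is to produce one explicit curve through the origin whose multiplicity there is large compared with its $\theta$-degree, and then use the elementary bound $\varepsilon(\theta)\le (\theta\cdot D)/\mathrm{mult}_0 D$. This bound holds for every effective $1$-cycle $D$ through $0$, and by homogeneity of $A$ the Seshadri constant does not depend on the point. The natural candidate is the image of $C$ under multiplication by $2$.

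\emph{Step 1 (normalize $C$).} Let $g=g(C)$ and let $w_1,\dots,w_{2g+2}\in C$ be the Weierstrass points. Translate $C$ so that $w_1=0$; this changes neither $(\theta\cdot C)$ nor $\varepsilon(\theta)$. By the universal property of the Jacobian, the inclusion $C\hookrightarrow A$ factors as $C\xrightarrow{\ \mathrm{AJ}_{w_1}\ }J(C)\xrightarrow{\ \varphi\ }A$ with $\varphi$ a homomorphism. In $J(C)$ each class $w_i-w_1$ is $2$-torsion, since $2w_i\sim 2w_1$ is the $g^1_2$. Hence the $2g+2$ points $w_i\in C\subset A$ are $2$-torsion points of $A$. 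They are pairwise distinct because $C\subset A$ is embedded. Moreover, the hyperelliptic involution acts as $-1$ on $J(C)$ for a Weierstrass base point, so $C=-C$ in $A$; this symmetry is not strictly needed below.

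\emph{Step 2 (the test curve).} Let $D=[2]_*C$ be the pushforward cycle, an effective $1$-cycle supported on $[2](C)$. By the projection formula and $[2]^*\theta\equiv 4\theta$,
\[
(\theta\cdot D)=([2]^*\theta\cdot C)=4(\theta\cdot C).
\]
The multiplicity of $D$ at $0$ is at least the number of points of $C$ mapped to $0$, counted over branches. Each $w_i$ maps to $0$ and contributes a branch of multiplicity $\ge1$, so $\mathrm{mult}_0 D\ge 2g+2$. If $C\to[2](C)$ has degree $2$, the cycle carries the factor $2$ and the count is unchanged. Therefore
\[
\varepsilon(\theta)\le \frac{4(\theta\cdot C)}{2g+2}=\frac{2(\theta\cdot C)}{g+1}.
\]

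\emph{Main obstacle.} The step needing the most care is the multiplicity count at $0$. One must check that the inequality $\mathrm{mult}_0([2]_*C)\ge\#([2]|_C)^{-1}(0)$ is correct for pushforward cycles. This follows from the fact that multiplicity is the degree of the tangent-cone cycle, and each branch contributes at least its own multiplicity. One must also use the Seshadri bound in its form for arbitrary $1$-cycles, either via the blow-up or by applying it componentwise. A secondary point is that Step 1 uses that $C$ is irrational, i.e.\ $g\ge1$, so that the Abel–Jacobi factorization makes sense; this is guaranteed by the paper's convention on hyperelliptic curves.
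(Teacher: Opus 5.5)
Your proof is correct, and it takes a genuinely different and much more elementary route than the paper.

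The paper obtains the bound as a special case of Theorem \ref{general}. It restricts Gau\ss-Wahl maps from $A$ to $C$ (Proposition \ref{sobreyectividad-subvariedades} together with Serre vanishing). It then uses the Bertram--Ein--Lazarsfeld value $e_p(C)=(p+1)(h+1)+h-1$ for hyperelliptic $C$ to force non-surjective maps $\gamma^p_{m(p)L,m(p)L}$ on $A$. Theorem \ref{surjGW} turns this into $\beta_p(\theta)\ge m(p)/2$, and the asymptotic formula $\varepsilon(\theta)=\lim_p (p+1)/\beta_p(\theta)$ of \cite{jets} finishes the argument.

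You instead exhibit an explicit test curve, $[2](C)$, with $C$ translated so that its Weierstrass points are $2$-torsion. This is the mechanism behind the classical bound $\varepsilon(JC,\theta_C)\le 2g/(g+1)$ for hyperelliptic Jacobians, transplanted to an arbitrary $A$.

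Your multiplicity count holds in full generality. Since $[2]$ is \'etale and $C$ is smooth, one has exactly $\mathrm{mult}_0([2]_*C)=\#(C\cap A[2])\ge 2g+2$, whatever the degree of $C\to[2](C)$. That degree need not be $1$ or $2$: for instance, when $g=1$ the translated $C$ is an elliptic subgroup and the degree is $4$. So the case remark in Step 2 is superfluous, and the general inequality from your last paragraph is what is really used.

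What your route buys:
\begin{itemize}
\item It needs none of the machinery of \cite{jets} or \cite{BEL}.
\item Because the curve is explicit, it gives the direct proof of Theorem \ref{A} that the introduction says would be interesting. Suppose $E\equiv m\Theta$ has $\mathrm{mult}_0E\ge k$ and does not contain $[2](C)$. Then $4m(\theta\cdot C)=(E\cdot [2]_*C)\ge k\,\mathrm{mult}_0([2]_*C)\ge 2k(g+1)$. Hence, under $2m(\theta\cdot C)<k(g+1)$, the curve $[2](C)$ lies in $\mathrm{Bs}(\{I_0^k(m\Theta)\})$.
\end{itemize}

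What the paper's route buys is generality. Theorem \ref{general} bounds $\varepsilon(\theta)$ for every smooth curve in terms of the intrinsic invariant $e_p(C)$. Any construction of non-surjective higher Gau\ss-Wahl maps (Question \ref{pregunta-gauss}) therefore translates into Seshadri bounds, for example toward the gonality conjecture at the end of Section 3. The same restriction mechanism also yields Corollary \ref{gausscurvas}. Your trick rests on the hyperelliptic involution acting as $-1$, so that Weierstrass points differ by $2$-torsion, and this has no evident analogue for curves of higher gonality.
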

 
On the other hand, a now well known result of Nakamaye (\cite[Theorem 1.1]{Nakamaye}, see also \cite{analogs}), says that for any polarization $\theta$ on an abelian variety $A$, we have that $\varepsilon(A,\theta)\geq 1$ and equality is attained only in the case that 
\begin{equation}
\label{tipo-especial}
(A,\ell)\simeq(E,\theta_{E})\boxtimes(B,m), 
\end{equation}
where $(E,\theta_{E})$ is a principally polarized elliptic curve. In particular, as a consequence of Theorem \ref{B} we obtain the following Castelnuovo-type inequality:
\begin{corollary}
\label{castelnuovo}
Let $(A,\theta)$ be a polarized abelian variety and let $C\subset A$ be a smooth hyperelliptic curve. Then we have that
$$g(C)\hspace{0.1cm}\leq\hspace{0.1cm} 2(\theta\cdot C) - 1.$$
Moreover, equality holds if and only if $(A,\theta)$ is as in \eqref{tipo-especial} via an isomorphism that identifies $C$ with $E\times\{b\},$ for some $b\in B$.
\end{corollary}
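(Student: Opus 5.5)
The inequality comes from combining Theorem \ref{B} with Nakamaye's lower bound $\varepsilon(\theta)\geq 1$:
$$1\leq \varepsilon(\theta)\leq \frac{2(\theta\cdot C)}{g(C)+1},$$
which gives $g(C)+1\leq 2(\theta\cdot C)$, that is, $g(C)\leq 2(\theta\cdot C)-1$. The substance of the corollary is therefore the characterization of equality.

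\emph{Equality forces the split form.} If $g(C)=2(\theta\cdot C)-1$, both inequalities above are equalities, so $\varepsilon(\theta)=1$. By Nakamaye's theorem, $(A,\theta)\simeq (E,\theta_E)\boxtimes(B,m)$ as in \eqref{tipo-especial}. It remains to show that $C$ is a translate $E\times\{b\}$ under this isomorphism, which I expect to be the main obstacle.

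\emph{Locating $C$.} Consider the projection $p\colon A\to B$. Suppose $p(C)$ is not a point. Then $p|_C$ is a finite map onto a curve, and $\theta\cdot C\geq (p^{*}m)\cdot C$. If $m$ is a polarization on $B$, then $(p^{*}m)\cdot C=\deg(p|_C)\,(m\cdot p(C))$, and I expect this to give too much degree. Two ways to make this precise:
\begin{enumerate}
\item Apply Theorem \ref{B} to $(B,m)$ and the image curve. This requires the image to be hyperelliptic and smooth, which generally fails, so it is problematic.
\item More robustly, go back to the proof of Theorem \ref{B}. Presumably equality there forces the Seshadri constant to be computed by $C$ itself, with $\varepsilon(\theta)=(\theta\cdot C)/\mathrm{mult}$ or a similar equality case. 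One would then use the known structure: for $\varepsilon=1$ on such a product, the Seshadri curves are exactly the translates of $E$, and these are the only curves with $(\theta\cdot C')/\mathrm{mult}_x C'=1$.
\end{enumerate}

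\emph{A direct argument.} Alternatively, write $\theta\cdot C=\deg(q|_C)+(p^{*}m\cdot C)$, where $q\colon A\to E$ is the first projection. Each term is a nonnegative integer. If $p(C)$ is not a point, the second term is at least $1$; in fact it is at least $\varepsilon(B,m)\,\mathrm{mult}\geq 1$, and, arguing inside the abelian subvariety generated by $C$, it is at least $g(C)-1$ times something. Combined with Riemann–Hurwitz for $C\to E$ (when $q|_C$ is nonconstant, $2g(C)-2$ equals the ramification, so the degree is not bounded below) this is delicate. The cleanest route is to reduce to the abelian subvariety $A'$ generated by $C$, whose dimension is at most $g(C)$. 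If $A'\not\subset E\times\{b\}$, then the restricted polarization has Seshadri constant at least $1$, with equality only if it splits again. Iterate: at each step either $C$ lies in a fiber $E\times\{b\}$, or we pass to a strictly smaller abelian variety still achieving equality. By dimension count this terminates with $C$ contained in an elliptic curve. Then $C=E\times\{b\}$ with $g=1$ and $\theta\cdot C=1$, which indeed gives equality.

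\emph{Converse.} If $C=E\times\{b\}$, then $g(C)=1$ and $\theta\cdot C=\theta_E\cdot E=1$, so $2\cdot 1-1=1=g(C)$ and equality holds.
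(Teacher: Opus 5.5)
\textbf{There is a genuine gap: you never show that $C$ is a fibre $E\times\{b\}$.} The parts that match the paper are fine: the inequality from Theorem~\ref{B} and $\varepsilon(\theta)\geq 1$, the deduction that equality gives $\varepsilon(\theta)=1$ and hence a splitting $(A,\theta)\simeq(E,\theta_E)\boxtimes(B,m)$, and the converse check ($g=1$, $\theta\cdot C=1$).

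Your option 2 is unfounded. The proof of Theorem~\ref{B} goes through jets-separation thresholds and Gau\ss-Wahl maps, and it says nothing about which subvariety computes $\varepsilon(\theta)$. Moreover, for smooth $C$ the ratio $(\theta\cdot C)/\mathrm{mult}_xC$ is just $\theta\cdot C$, so ``$C$ computes $\varepsilon(\theta)=1$'' is already the conclusion $\theta\cdot C=1$.

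Your iteration does not work either. Equality does persist on the abelian subvariety $A'$ generated by $C$, since the degree is unchanged, so $A'\simeq E'\times B'$ again. But $C$ generates $A'$, so there is no strictly smaller abelian variety to descend to. Nothing in your argument excludes $\dim B'\geq 1$ with both projections of $C$ nonconstant, and that is exactly the case to rule out. As you noticed, degree counting plus Riemann--Hurwitz cannot rule it out, because a curve with a degree-$d_1$ map to an elliptic curve can have arbitrarily large genus. The hyperelliptic structure has to be used a second time.

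\textbf{The missing ingredient is the Castelnuovo--Severi inequality.} Let $q\colon A\to E$ and $p\colon A\to B$ be the projections, and put $d_1=(q^*\theta_E\cdot C)$. Then $d_1\leq\theta\cdot C$, with equality iff $p|_C$ is constant.

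Suppose $q|_C$ is nonconstant, so $d_1=\deg(q|_C)$. The $2:1$ map $C\to\Ps^1$ and $q|_C\colon C\to E$ cannot factor through a common map of degree $>1$: that map would have degree $2$, its target would be $\Ps^1$, and $\Ps^1$ cannot dominate $E$. Castelnuovo--Severi then gives $g(C)\leq 2\cdot 0+d_1\cdot 1+(2-1)(d_1-1)=2d_1-1\leq 2(\theta\cdot C)-1=g(C)$. Hence $d_1=\theta\cdot C$, so $C$ is contracted by $p$ and $C=E\times\{b\}$.

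The remaining case, $q|_C$ constant, is where a descent really applies. There $C\subset\{e\}\times B$, equality persists in $(B,m)$, so $B$ splits off another principally polarized elliptic factor, and you repeat. The paper does this in one step: it writes $A=\prod E_i\times B'$ with $B'$ free of such factors and picks an $E_{i_0}$ onto which $C$ projects nontrivially. This also settles which isomorphism appears in the statement; it need not be the one Nakamaye's theorem first gives you.
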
 
To frame this result, it is worth to point out that for arbitrary curves inside polarized abelian varieties, there are some Castelnuovo-type inequalities, which are quadratic in the degree (\cite{degree-curves},\cite{PPo-castelnuovo}), but it is well known that they are not sharp. Our inequality has the advantage that it is sharp and linear on the degree, but has the obvious limitation that it holds just for smooth hyperelliptic curves. 

We conclude this introduction with a couple of words about the proofs of our results. The rough idea is that, as proved in \cite{jets}, the Seshadri constant of a polarized abelian variety $(A,\theta)$ is related to the asymptotic behaviour of the Gau\ss-Wahl maps $\gamma_{m\Theta,m\Theta}^{p}$ of powers of line bundles defining the polarization (we refer to Section 2 for their precise definition). On the other hand, we are able to show that, asymptotically, the surjectivity of Gau\ss-Wahl maps on $A$ can be transfered to the surjectivity of Gau\ss-Wahl maps on curves inside $A.$ Finally, thanks to Bertram-Ein-Lazarsfeld work (\cite{BEL}), there are many non-surjective Gau\ss-Wahl maps on smooth hyperelliptic curves. More precisely, if for $p\in\Z_{\geq 0},$ we write
$$e_{p}(C) : = \max\left\{e\in\mathbb{Z}_{\geq 0} : \gamma_{L,L}^{p} \hspace{0.1cm}\text{is not surjective for every $L\in\Pic(C)$ with $\mathrm{deg}L = e$}\right\},$$
then Bertram, Ein and Lazarsfeld show that 
\begin{equation}
\label{desigualdad-ep}
e_{p}(C)\leq (p+1)(g(C)+1)+g(C)-1
\end{equation}
with equality if and only if $C$ is hyperelliptic. With this in mind, Theorem \ref{B} is a direct consequence of the following statement, which we prove in Section 3:
\begin{theorem}
\label{general}
Let $(A,\theta)$ be a polarized abelian variety and let $C\subset A$ be a smooth curve contained in $A.$ Then we have that
$$\varepsilon(\theta)\leq 2(\theta\cdot C)\cdot\liminf_{p} \frac{p+1}{e_{p}(C)}.$$
\end{theorem}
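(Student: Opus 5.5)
The plan is to compare Gau\ss-Wahl maps on $A\times A$ with Gau\ss-Wahl maps on $C\times C$ through restriction, and then feed in the asymptotic description of $\varepsilon(\theta)$ from \cite{jets}. Write $d=(\theta\cdot C)$. I would use the result of \cite{jets} in the following form, which I am assuming and have not checked against the precise statement there: for every $\delta>0$ and all $p\gg 0$, the map $\gamma^{p}_{m\Theta,m\Theta}$ on $A$ is surjective as soon as
$$m\geq \frac{2(p+1)}{\varepsilon(\theta)}(1+\delta).$$
Heuristically this holds because surjectivity of $\gamma^p_{L,L}$ amounts to $L\boxtimes L\otimes I_\Delta^{p+1}$ having vanishing $H^1$ on $A\times A$. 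On the abelian variety $A\times A$, that vanishing is governed by the Seshadri constant of $m\theta\boxtimes m\theta$ along the diagonal. By the difference map $(x,y)\mapsto x-y$, that Seshadri constant reduces to $\varepsilon$ of $2m\theta$ at a point, i.e. to $2m\,\varepsilon(\theta)$. With this input, the theorem follows once I show the following: for $p\gg0$ and $m$ as above, the map $\gamma^p_{L,L}$ on $C$ is surjective for $L=m\Theta|_C$ (or a suitable translate), a line bundle of degree $md$.

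Granting that, fix $p$ and take $m=\lceil 2(p+1)(1+\delta)/\varepsilon(\theta)\rceil$. Then the map $\gamma^p_{L,L}$ is surjective for some $L$ of degree $md$, so $md\neq e$ for any $e$ at which every degree-$e$ map fails. From the Bertram-Ein-Lazarsfeld analysis \cite{BEL}, surjectivity propagates upward in degree (a surjective $\gamma^p$ for degree $e$ gives surjectivity in larger degrees by adding general points), so $md>e_p(C)$. This gives
$$e_p(C)<d\Bigl(\frac{2(p+1)(1+\delta)}{\varepsilon(\theta)}+1\Bigr).$$
Dividing by $e_p(C)$ and taking $\liminf_p$ yields $\varepsilon(\theta)\le 2d(1+\delta)\liminf_p \frac{p+1}{e_p(C)}$, and letting $\delta\to0$ proves the statement. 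If monotonicity of $e_p$ in the degree is not literally available, I would instead work throughout with the set of all $L$ of degree $md$ obtained as restrictions of $t_x^*m\Theta$, $x\in A$. These restrictions give every element of a translate of the image of $\Pic^0(A)\to\Pic^0(C)$, and that is enough to beat the condition ``for every $L$'' in the definition of $e_p$ when the restriction map on $\Pic^0$ is surjective, which it is since $C$ generates an abelian subvariety. In general, I would pass to that subvariety.

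The transfer step uses the commutative square given by restriction $A\times A\supset C\times C$. Restriction sends $I_{\Delta_A}^{p}$ into $I_{\Delta_C}^{p}$, and on the targets the map is
$$H^0(A,2m\Theta)\otimes \mathrm{Sym}^pH^0(\Omega_A)\longrightarrow H^0(C,2m\Theta|_C\otimes\omega_C^{p}),$$
because $\Omega_A$ is trivial and $\Omega_A|_C\to\omega_C$ is surjective. If $\gamma^p_A$ is surjective and this right-hand map is surjective, then $\gamma^p_C$ is surjective. So everything reduces to surjectivity of this restriction map, and I expect this to be the main obstacle.

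The map factors through $H^0(C,2m\Theta|_C\otimes \mathrm{Sym}^p\Omega_A|_C)$. It therefore suffices to have two vanishings: $H^1(A,2m\Theta\otimes I_C)=0$, which is fine for $m\gg0$ but must hold uniformly at the scale $m\sim p$; and $H^1(C,2m\Theta|_C\otimes K_p)=0$, where $K_p=\ker(\mathrm{Sym}^p\Omega_A|_C\to\omega_C^p)$. The bundle $K_p$ has a filtration with graded pieces of the form $\mathrm{Sym}^{p-i}(\ldots)\otimes\omega_C^{i}\otimes N$, whose degrees grow linearly in $p$. The danger is that the required inequality $2md\gtrsim p(2g-2)$ might not follow from $m\sim 2(p+1)/\varepsilon$. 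My first attempt to handle this is the following. Replace $\Theta$ by $r\Theta$ and $p$ by a subsequence, so that $m$ can be taken larger than $p$ by any fixed factor without changing the asymptotic ratio $(p+1)/m$. Alternatively, replace the full symmetric power by the image of $\gamma_A^p$ composed with the multiplication by sections of $2m\Theta$ vanishing on $C$, using Castelnuovo-Mumford regularity of $I_C(\Theta)$ to control $H^1$ uniformly in $m$.
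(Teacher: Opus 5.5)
\textbf{Verdict: genuine gap.} The overall architecture matches the paper's, and your input from \cite{jets} is correct. The paper argues in the contrapositive direction: it takes $m(p)$ maximal with $m(p)d\le e_p(C)$, pushes non-surjectivity from $C$ up to $A$ via Proposition \ref{sobreyectividad-subvariedades}, and reads off $\beta_p(\theta)\ge m(p)/2$. Your assumed input is Theorem \ref{surjGW} with $c=d=m$, combined with $\beta_p(\theta)\le(1+\delta)(p+1)/\varepsilon(\theta)$ for $p\gg0$, which is \eqref{seshadri-limit}.

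\textbf{The gap.} It is the step you flag yourself: surjectivity of $H^0(C,2m\Theta|_C\otimes\mathrm{Sym}^p\Omega_A|_C)\to H^0(C,2m\Theta|_C\otimes\omega_C^{\otimes p})$. You leave it open, and the route you sketch cannot close it.
\begin{itemize}
\item Bounding $H^1(C,K_p\otimes 2m\Theta|_C)$ through the graded pieces of the symmetric-power filtration forces you to control pieces such as $\mathrm{Sym}^pN^\vee_{C/A}\otimes 2m\Theta|_C$. Its slope is $2md-p(2g-2)/(\dim A-1)$.
\item With $m\approx 2(p+1)/\varepsilon(\theta)$, this becomes an inequality between $d/\varepsilon(\theta)$ and $g(C)$. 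That inequality is invariant under rescaling $\theta$ and fails in general. For example, take a smooth curve in a $(1,e)$-polarization, $e\gg0$, on an abelian surface of Picard number one; there $\varepsilon(\theta)>4$ by the bound used in Example \ref{caso-sup}.
\item Your remedies do not help. Replacing $\Theta$ by $r\Theta$ scales $d$ and $\varepsilon(\theta)$ together, so nothing changes. Taking $m$ a fixed factor $\lambda>1$ above the threshold only proves the inequality with an extra factor $\lambda$. Regularity of $I_C(\Theta)$ controls cohomology on $A$, not the kernel $K_p$ on $C$.
\end{itemize}

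\textbf{The missing idea.} $K_p$ is not merely filtered; it is a quotient. Taking symmetric powers of the conormal sequence, $K_p$ is the image of $N^\vee_{C/A}\otimes\mathrm{Sym}^{p-1}\Omega_A|_C$. Since $\Omega_A$ is trivial, this is a direct sum of $\binom{\dim A+p-2}{p-1}$ copies of $N^\vee_{C/A}$.
\begin{itemize}
\item Because $H^2$ vanishes on a curve, $H^1(C,K_p\otimes 2m\Theta|_C)$ is a quotient of copies of $H^1(C,N^\vee_{C/A}\otimes 2m\Theta|_C)$.
\item So the single vanishing $H^1(C,N^\vee_{C/A}\otimes 2m\Theta|_C)=0$ suffices, and it does not depend on $p$.
\item This vanishing and $H^1(A,I_C(2m\Theta))=0$ both hold for all $m\ge m_0$ by Serre vanishing. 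Your $m$ tends to infinity with $p$, so there is no uniformity issue.
\item On an abelian surface one even has $K_p\cong\mathcal{O}_C(-C)^{\oplus p}$, while the graded pieces are $\mathcal{O}_C((p-2j)C)$. This shows how lossy the filtration is.
\end{itemize}
This is exactly the content of Proposition \ref{sobreyectividad-subvariedades}.

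\textbf{A secondary error in your last step.} Your fallback for getting ``every $L$'' is wrong. That $C$ generates $A$ makes $\iota^{\ast}\colon\Pic^0A\to\Pic^0C$ have finite kernel; it does not make it surjective. Surjectivity essentially requires $g(C)=\dim A$, which is the Abel--Jacobi setting of Section 4. Likewise, ``adding general points'' is not justified for a single line bundle, because Lemma \ref{inductivogauss} needs surjectivity for all pairs of degree $e$.

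So your deduction $md>e_p(C)$ rests on monotonicity in the degree of the property defining $e_p(C)$. The paper uses the same monotonicity implicitly, when it passes from $m(p)d\le e_p(C)$ to non-surjectivity in degree $m(p)d$. This point is therefore shared with the paper rather than a divergence from it, but the justification you propose does not hold.
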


Applying this result (or rather its proof) to the case of a smooth curve embedded inside its polarized Jacobian via an Abel-Jacobi map, we obtain a new result about Gau\ss-Wahl maps on curves. Concretely, in Section 4 we prove:
\begin{corollary}
\label{gausscurvas}
Let $C$ be a smooth curve and $L_{1},L_{2}\in\Pic(C)$. Assume that $g(C)\geq 5$ and that $C$ is not hyperelliptic nor bielliptic. Fix a positive integer $p\geq 2$ and assume that $\deg L_{1} =\deg L_{2} \geq g(C)(p+g(C)).$ Then the Gau\ss-Wahl map $\gamma_{L_1,L_2}^{p}$ is surjective.    
\end{corollary}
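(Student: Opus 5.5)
The plan is to run the argument of Theorem \ref{general} in reverse, for $A=J(C)$ with its principal polarization $\theta$ and $C\subset J(C)$ embedded by an Abel--Jacobi map $u$. Here we deduce surjectivity on $C$ from positivity on $A$. Recall that $\gamma^{p}_{L_1,L_2}$ is surjective as soon as
$$H^{1}\bigl(C\times C,\ I_{\Delta_C}^{p+1}\otimes(L_1\boxtimes L_2)\bigr)=0.$$
On $A\times A$, the analogous vanishing for $I_{\Delta_A}^{p+1}\otimes(M_1\boxtimes M_2)$ can be reduced, via the difference map $A\times A\to A$, to statements about $I_0^{p+1}$ twisted by multiples of $\theta$. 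These statements are controlled by the Seshadri constant through the results of \cite{jets} and Debarre's remark \cite{deb}.

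\textbf{Step 1: extend line bundles from $C$ to $A$.} Restriction $u^{*}\colon \mathrm{Pic}^0(J(C))\to\mathrm{Pic}^0(C)$ is an isomorphism, and $\deg(\Theta|_C)=g$. Hence every line bundle on $C$ of degree $mg$ is the restriction of a translate of $m\Theta$. For a general degree $d\geq g(p+g)$, I would write $d=mg+r$ with $0\le r<g$. I would then handle the remainder with products of translates of $\Theta$ and an effective divisor of degree $r$, using the theorem of the square so that only the class of $L_i$ matters.

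\textbf{Step 2: positivity on $A\times A$.} I would show that the relevant ideal sheaves $I_{\Delta}^{p+1}$ on $A\times A$, twisted by $M_1\boxtimes M_2$ with $M_i$ algebraically equivalent to $m\theta$, have vanishing higher cohomology once $p+1$ is small compared with $m\,\varepsilon(\theta)$. The hypotheses on $C$ enter here. For $g\ge5$ with $C$ neither hyperelliptic nor bielliptic, the Seshadri constant of $J(C)$ (or rather the refined local positivity along $C-C$ used in the proof of Theorem \ref{general}) should be bounded below well enough. It should be enough to make $p+1$ small in this sense whenever $mg+r\ge g(p+g)$. Typically one excludes low-degree curves in $C-C$, and the excluded cases are exactly the hyperelliptic and bielliptic ones.

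\textbf{Step 3: restrict to $C\times C$.} Restricting from $A\times A$ to $C\times C$, I would compare $I_{\Delta_A}^{p+1}|_{C\times C}$ with $I_{\Delta_C}^{p+1}$. Because $C$ is smoothly embedded, the two agree up to a torsion-free quotient supported along the diagonal. The vanishing from Step 2 should then transfer, provided the relevant $H^2$ term vanishes, which is where the lower bound $\deg L_i\ge g(p+g)$ is used.

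I expect Step 2 to be the main obstacle: getting an effective, sharp enough lower bound on local positivity for Jacobians of non-hyperelliptic, non-bielliptic curves. The first thing I would try is a Ran/Welters-type description of the curves through the origin of $J(C)$ with small $\theta$-degree, which can only arise from $g^1_2$'s or bielliptic structures. That rules out small Seshadri curves, and I would then feed the resulting bound into the argument of Theorem \ref{general}.
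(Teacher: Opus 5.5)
Your overall plan is the right one: embed $C$ by an Abel--Jacobi map, extend line bundles through $\Pic^0(JC)\cong\Pic^0(C)$, get positivity on $JC$ from the Seshadri constant, and restrict to $C$. But both load-bearing steps are left as hopes, and as formulated they do not yield the statement.

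In Step 2 there is no effective mechanism. The identity $\varepsilon(\theta)=\lim_p(p+1)/\beta_p(\theta)=\sup_p(p+1)/\beta_p(\theta)$ is asymptotic. At fixed $p$ it only gives $\beta_p(\theta)\ge(p+1)/\varepsilon(\theta)$, which is the wrong direction, so ``$p+1$ small compared with $m\,\varepsilon(\theta)$'' proves nothing. Two ingredients are missing:
\begin{enumerate}
\item the multiplier-ideal upper bound $\beta_p(\theta)\le(p+g)/\varepsilon(\theta)$ of \cite[Proposition 6.3]{jets}, where the shift by $g=\dim JC$ comes from the Kawamata--Viehweg/Nadel argument on the blow-up of the origin;
\item the specific value $\varepsilon(JC,\theta)>2$ for $g\ge5$ and $C$ neither hyperelliptic nor bielliptic, which is \cite[Theorem 7]{deb} and need not be rederived.
\end{enumerate}
No ``local positivity along $C-C$'' enters, and nothing of the sort appears in the proof of Theorem~\ref{general}. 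Together these give $\beta_p(\theta)<(p+g)/2=cd/(c+d)$ with $c=d=p+g$. Theorem~\ref{surjGW} then makes $\gamma^p$ surjective on $JC$ for the pair $(p+g)\Theta\otimes\eta_1,\ (p+g)\Theta\otimes\eta_2$, for all $\eta_i\in\Pic^0(JC)$. Their restrictions are exactly the degree-$g(p+g)$ line bundles on $C$. That is where the threshold comes from; your sketch asserts $g(p+g)$ but has no way to produce it.

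Step 3 is also a genuine gap: $H^1$-vanishing on $A\times A$ does not restrict to $C\times C$. Write $K=I_{\Delta_A}^{p+1}\cap I_{C\times C}$, so that $0\to K\to I_{\Delta_A}^{p+1}\to I_{\Delta_C}^{p+1}\to 0$. You would then need $H^2(K\otimes(M_1\boxtimes M_2))=0$, which you neither identify nor control. Moreover, the degree hypothesis is consumed on the $JC$ side, not in the restriction step.

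The paper transfers \emph{surjectivity} of $\gamma^p$ rather than a vanishing, through the commutative square of Proposition~\ref{sobreyectividad-subvariedades}. This needs two inputs:
\begin{enumerate}
\item $H^1(JC,I_C(2(p+g)\Theta)\otimes\eta)=0$, which is automatic because $I_C(2\Theta)$ is IT(0) for an Abel--Jacobi curve;
\item surjectivity of $S^pH^0(\omega_C)\otimes H^0(L_1\otimes L_2)\to H^0(\omega_C^{\otimes p}\otimes L_1\otimes L_2)$. This is the relevant map because $\Omega_{JC}=H^0(\omega_C)\otimes\mathcal{O}_{JC}$. It is handled by Mumford--Castelnuovo, and this is where the paper uses $p\ge2$ (to get $\deg\omega_C^{\otimes p}\ge 2g$), a hypothesis your plan never touches.
\end{enumerate}

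Finally, the remainder $r$ in Step 1 is not handled. $\mathcal{O}_C(D)$ with $\deg D=r<g$ is in general not a restriction from $JC$, so the $JC$-side argument does not apply to it. The paper instead proves Lemma~\ref{inductivogauss}: surjectivity for all pairs in degree $e$ implies surjectivity for all pairs in degree $e+1$, because every section of $\omega_C^{\otimes p}\otimes M_1\otimes M_2$ vanishes on some $x+y$. This reduces everything to degree exactly $g(p+g)$.
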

  
This article structures as follows: in Section 2 we recall some preliminaries about Gau\ss-Wahl maps on curves and the theory of jets-separation thresholds introduced in \cite{jets}, as well as its relation with Seshadri constants. In particular, we recall the relation between Theorem \ref{B} and Theorem \ref{A}. In Section 3 we prove Theorem \ref{general} and Theorem \ref{B} (the later being a direct consequence of the former). In particular, we explain how this statement is a higher-dimensional version of the fact that smooth hyperelliptic curves on abelian surfaces have genus at most five (e.g \cite[Theorem 2.8]{borowka-Ortega}) and we discuss the Castelnuovo-type inequality given by Corollary \ref{castelnuovo}. In Section 4 we specialize our methods to the case of Abel-Jacobi curves and prove Corollary \ref{gausscurvas}. In Section 5 we give some concluding remarks and mention a couple of elementary observations about the condition $(\theta\cdot C)\leq g(C)$. In particular, there we observe that, under that condition, $(A,\theta)$ is the polarized Jacobian of $C$ if and only if the generalized Brill-Noether locus
$$W_{C,A}^{1}(\Theta):=\{\alpha\in\Pic^{0}A : h^{0}(C,\mathcal{O}_{C}(\Theta_{\alpha}))\geq 2\}$$
has the \emph{expected codimension} as a degeneracy locus (see Proposition \ref{fibras-suma}).

\vspace{0.2cm}

\textbf{Acknowledgements:} 

\vspace{0.2cm}


\section{Preliminaries}

In this section we recall the preliminary materials that we will need. We start by fixing some notation.  Let $A$ be an abelian variety defined over an algebraically closed field $k$ of characteristic zero. By \emph{a polarization} on $A$ we mean an ample class $\ell\in\NS(A).$ For $A$ as before, we write 
$$\Pic^{0}A = \{L\in\Pic(A) : t_{x}^{\ast}L\simeq L \hspace{0.1cm}\text{for all $x\in A$}\},$$
where $t_{x}:A\rightarrow A$ is the morphism given by $z\mapsto x+z.$ We will write $\hat{A}$ for the dual of $A,$ that is, $\hat{A}$ is the abelian variety parametrizing traslation-invariant line bundles on $A.$ We fix $\mathcal{P}$ a normalized Poincar\'e bundle on $A\times\hat{A}$ and for $\alpha\in\hat{A},$ write $P_{\alpha}$ for the line bundle on $A$ parametrized by $\alpha$ (equivalently, $P_{\alpha}\in\Pic^{0}A$ is the fiber of $\mathcal{P}$ over $\alpha$). 

\subsection{Cohomological jump loci and vanishing thresholds}

For a coherent sheaf $\mathcal{F}$ on an abelian variety $A,$ we consider the cohomological jump loci
$$V^{i}(\mathcal{F}):= \left\{\alpha\in\hat{A} : H^{i}(A,\mathcal{F}\otimes P_{\alpha})\neq 0 \right\}$$
and we say that $\mathcal{F}$ is GV if 
$$\operatorname{codim}_{\hat{A}} V^{i}(\mathcal{F}) \geq i \hspace{0.5cm}\forall i\geq 0.$$
We also say that $\mathcal{F}$ is IT(0) if $V^{i}(\mathcal{F}) = \emptyset$ for $i>0.$
For this work, the following two examples are relevant:
\begin{ejemplo}[Example 2.5 in \cite{jets}]
\label{jets-ejemplo}
Let $0\in A$ be the origin of the group law and write $I_{0}$ for its ideal sheaf. Let $L$ be an ample line bundle on $A$ and $p\in\Z_{\geq 0}.$ Then $L$ separates $p$-jets at a general point $x$ (i.e the restriction map $H^{0}(A,L)\rightarrow H^{0}(A,L\otimes\mathcal{O}_{A}/I_{x}^{p+1})$ is surjective) if and only if $L\otimes I_{0}^{p+1}$ is GV.   
\end{ejemplo}

\begin{ejemplo}
\label{jac-ejemplo}
Let $C$ be a smooth curve embedded in its principally polarized Jacobian via an Abel-Jacobi embedding. Then $I_{C}(\Theta)$ is GV (\cite[Proposition 4.4]{regAVI}). Conversely, if $C$ is an integral curve generating an indecomposable principally polarized abelian variety and $I_{C}(\Theta)$ is GV, then $C$ has minimal cohomology class (\cite[Theorem B]{GV-min-GV}) and thus $(A,\theta)$ is the polarized Jacobian of $C.$ Moreover, by Theorem 4.1 in \cite{regAVI}, the sheaf $I_{C}(2\Theta)$ is IT(0).
\end{ejemplo}

Remarkably, in \cite[\S 5]{cohrank} the notion of being GV is extended to the setting of $\Q$-twisted sheaves. To explain this, recall that a $\Q$-twisted object $\mathcal{F}\left<t\ell\right>$ is the class of a pair $(\mathcal{F},t\ell),$ where $\mathcal{F}\in\operatorname{Coh}(A),$  $t$ is a rational number and $\ell\in\NS(A)$ is a polarization, under the equivalence relation generated by identifying $(\mathcal{F}\otimes L^{\otimes m},t\ell)$  with $(\mathcal{F},(t+m)\ell)$ for every integer $m$ and for every $L$ representing $\ell.$ In this context, we have: 
\begin{definicion}[\S 5 in \cite{cohrank}, Definition 3.1 in \cite{jets}]
\begin{enumerate}
\item[]
\item We say that the $\Q$-twisted sheaf $\mathcal{F}\left<t\ell\right>$ is GV if the sheaf $b_{A}^{\ast}\mathcal{F}\otimes L^{\otimes ab}$ is GV for a (any) representation of $t$ as a fraction $a/b,$ where $b_{A}:A\rightarrow A$ is the multiplication by $b$ isogeny.
\item Given a polarization $\ell,$ the vanishing threshold of $\mathcal{F}$ with respect to $\ell$ is the real number
$$\nu_{\ell}(\mathcal{F}) = \inf\left\{t\in\Q : \mathcal{F}\left<t\ell\right>\hspace{0.1cm}\text{is GV}\right\}.$$
\end{enumerate}
\end{definicion}

Inspired by \cite[\S 8]{cohrank} and Example \ref{jets-ejemplo} above, in \cite{jets} the $p$-jets separation threshold is defined as the number
$$\beta_{p}(\ell) := \nu_{\ell}(I_{0}^{p+1})= \inf\{t\in\Q : I_{0}^{p+1}\left<t\ell\right>\hspace{0.1cm}\text{is GV}\}.$$

\subsection{Seshadri constants and continuous linear systems}

Given a projective variety $A,$ a line bundle $L$ on $A$ and $x\in A,$ the \emph{Seshadri constant of $L$ at $x$} is the real number
\begin{equation}
\label{seshadridef}
\varepsilon(L,x):= \inf\left\{\frac{(L\cdot C)}{\mathrm{mult}_{x}(C)} : C\subset A\hspace{0.1cm}\text{irreducible curve with $x\in C$}\right\}.
\end{equation}
The Seshadri constant encodes the positivity of $L$ around $x.$ If $A$ is an abelian variety, then this invariant does not depend on the choice of the point $x$ and thus we obtain a measure of the \emph{global} positivity of $L.$ In this case, we simply write $\varepsilon(\ell)$ instead of $\varepsilon(L,x),$ where $\ell\in\NS(A)$ is the class of $L.$ Now, Theorem 4.1 in \cite{jets} asserts that
\begin{equation}
\label{seshadri-limit}
\varepsilon(\ell) = \lim_{p\to\infty} \frac{p+1}{\beta_{p}(\ell)} = \sup_{p} \frac{p+1}{\beta_{p}(\ell)}.
\end{equation}
On the other hand, by a theorem of Campana and Peternell (\cite[Theorem 2.3.18]{PositivityI}), there exists a positive-dimensional subvariety $V\subseteq A$ containing $x$ such that 
\begin{equation}
\label{campana}
\varepsilon(L,x) = \left(\frac{(L^{\dim V}\cdot V)}{\mathrm{mult}_{x} V}\right)^{1/\dim V}.
\end{equation}
If $V$ is a subvariety that achieves this equality, we say that $V$ \emph{computes} $\varepsilon(L,x).$ In \cite{deb}, Debarre observes that a variety computing the Seshadri constant of a polarized abelian variety must be contained in certain base loci of the continuous linear systems mentioned in the introduction. More precisely, recall that for $k,m\in\Z_{>0}$ we write
$$\{I_{0_A}^{k}(m\Theta)\} = \{D\equiv_{\mathrm{num}} m\Theta : \mathrm{mult}_{0_A}D\geq k\},$$
where $\Theta$ is a symmetric divisor representing $\theta$ and $0_{A}\in A$ is the origin of the group law. In this context, we have the following:
\begin{prop}[Lemma 1 in \cite{deb}]
\label{lema-deb}
Let $(A,\theta)$ be a polarized abelian variety and let $V\subseteq A$ be a subvariety that computes $\varepsilon(\theta) = \varepsilon(\mathcal{O}_{A}(\Theta),0_{A}).$ Assume that $\varepsilon(\theta)<k/m$ for $k,m\in\Z_{>0}.$ Then
$$V\subseteq \mathrm{Bs}(\{I_{0_A}^{k}(m\Theta)\}).$$
\end{prop}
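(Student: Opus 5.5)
We argue by contradiction: assume $V\not\subseteq \mathrm{Bs}(\{I_{0_A}^{k}(m\Theta)\})$ and produce a divisor $D\equiv_{\mathrm{num}} m\Theta$ with $\mathrm{mult}_{0_A}D\geq k$ that meets $V$ properly at $0_A$. The intersection inequality then gives $\varepsilon(\theta)\geq k/m$.

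\emph{Step 1: choosing the divisor.} By assumption there is $x\in V$ and $D\in\{I_{0_A}^{k}(m\Theta)\}$ with $x\notin D$. This does not yet make $D$ meet $V$ properly at $0_A$. We need a divisor through $0_A$ with multiplicity $\geq k$ there that does not contain $V$. Since $V\ni 0_A$, we cannot simply translate $D$ by $x$: the translate $t_{-x}^*D$ is numerically equivalent to $m\Theta$ and has multiplicity $\geq k$ at $-x$, not at $0_A$. A variety computing the Seshadri constant can be replaced by its translates, since $\varepsilon$ is independent of the point. So the cleaner reduction is the following: it suffices to show that $D\cap V$ has no component of dimension $\dim V$, i.e.\ $V\not\subseteq D$. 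That holds because $x\in V\setminus D$. Now $0_A\in D\cap V$ and $0_A\in V$ (the Seshadri constant is computed at $0_A$).

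\emph{Step 2: intersection inequality.} Set $d=\dim V\geq 1$. Since $V\not\subseteq D$, the cycle $D\cdot V$ is an effective $(d-1)$-cycle on $V$. Its multiplicity at $0_A$ is at least $\mathrm{mult}_{0_A}D\cdot\mathrm{mult}_{0_A}V\geq k\,\mathrm{mult}_{0_A}V$. If $d=1$ this already gives
$$m(\theta\cdot V)=(D\cdot V)\geq k\,\mathrm{mult}_{0_A}V.$$
If $d>1$, intersect further with $d-1$ general divisors $H_i\in|N\Theta|$ through $0_A$ (multiplicity $1$ there, $N\gg0$). Alternatively, use the definition of the Seshadri constant directly on curves: each component $W$ of $D\cdot V$ through $0_A$ has $(\theta^{d-1}\cdot W)\geq\varepsilon^{d-1}\mathrm{mult}_{0_A}W$. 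This follows from the general inequality $(L^{\dim W}\cdot W)\geq \varepsilon(L,x)^{\dim W}\mathrm{mult}_x W$, valid for every subvariety (Positivity I, 5.1.9). Summing over components gives
$$m\,(\theta^{d}\cdot V)=(\theta^{d-1}\cdot D\cdot V)\geq \varepsilon(\theta)^{d-1}\,\mathrm{mult}_{0_A}(D\cdot V)\geq \varepsilon(\theta)^{d-1}k\,\mathrm{mult}_{0_A}V.$$

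\emph{Step 3: conclusion.} Since $V$ computes the Seshadri constant, $(\theta^{d}\cdot V)=\varepsilon(\theta)^{d}\,\mathrm{mult}_{0_A}V$ by \eqref{campana}. Dividing gives $m\,\varepsilon(\theta)\geq k$, contradicting $\varepsilon(\theta)<k/m$. Hence $V\subseteq D$ for every $D$ in the continuous system, i.e.\ $V\subseteq\mathrm{Bs}(\{I_{0_A}^{k}(m\Theta)\})$.

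The main obstacle is Step 2 for $d>1$. Two points need care: the multiplicity inequality $\mathrm{mult}_{0_A}(D\cdot V)\geq \mathrm{mult}_{0_A}D\cdot\mathrm{mult}_{0_A}V$ for proper intersections (Fulton, Cor.~12.4), and the Seshadri lower bound for the components of $D\cdot V$ that pass through $0_A$. Components not through $0_A$ contribute nonnegatively because $\theta$ is ample, so they can be discarded.
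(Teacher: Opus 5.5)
Your argument is correct. The paper itself gives no proof and only cites Debarre's Lemma 1; your route is essentially the standard argument behind that lemma: a divisor $D\equiv_{\mathrm{num}} m\Theta$ with $\mathrm{mult}_{0_A}D\geq k$ and $V\not\subseteq D$ would give $m(\theta^{d}\cdot V)=(\theta^{d-1}\cdot D\cdot V)\geq \varepsilon(\theta)^{d-1}k\,\mathrm{mult}_{0_A}V$, which is incompatible with $(\theta^{d}\cdot V)=\varepsilon(\theta)^{d}\,\mathrm{mult}_{0_A}V$ and $m\,\varepsilon(\theta)<k$. The discussion of translates in Step 1 is superfluous, since the only input you use is $V\not\subseteq D$, and that follows immediately from the contradiction hypothesis.
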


In particular, we see that Theorem \ref{A} is a direct consequence of Theorem \ref{B}.

\subsection{Gau\ss-Wahl maps}

For line bundles $L,M$ on a smooth projective variety $X$ and $p\in\Z_{\geq 0},$ the $p$-th Gau\ss-Wahl map associated to the pair $(L,M)$ is the natural linear map 
$$\gamma_{L,M}^{p}: H^{0}(X\times X,(L\boxtimes M)\otimes I_{\Delta}^{p})\longrightarrow  H^{0}(X\times X,(L\boxtimes M)\otimes I_{\Delta}^{p}/I_{\Delta}^{p+1}),$$
where $\Delta\subset X\times X$ is the diagonal. Moreover, recall that, since $X$ is smooth, the target is isomorphic to $H^{0}(X,L\otimes M\otimes S^{p}\Omega_{X}).$ Note also that if $X$ is a curve, then $I_{\Delta}^{p}$ is the line bundle $\mathcal{O}_{X\times X}(-p\Delta).$ As customary, we write 
$$\operatorname{Rel}_{X}^{p}(L,M):= H^{0}(X\times X,(L\boxtimes M)\otimes I_{\Delta}^{p})$$
for the source of $\gamma_{L,M}^{p}.$

We have the following:
\begin{theorem}[Theorem 1.4 in \cite{jets}]
\label{surjGW}
Let $L,M$ be ample and algebraically equivalent line bundles on an abelian variety. Write $\ell$ for the class of $L$ and $M$ in $\NS(A).$ Let $c,d$ be positive integers and assume that 
$$\beta_{p}(\ell) < \frac{cd}{c+d}.$$
Then the $p$-th Gau\ss-Wahl map $\gamma_{cL,dM}^{p}$ is surjective. 
\end{theorem}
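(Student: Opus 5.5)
The plan is to reduce surjectivity of $\gamma^{p}_{cL,dM}$ to a cohomology vanishing on $A\times A$, and then to untwist the diagonal by an isogeny until the threshold hypothesis applies. From the exact sequence
$$0\to (cL\boxtimes dM)\otimes I_{\Delta}^{p+1}\to (cL\boxtimes dM)\otimes I_{\Delta}^{p}\to (cL\boxtimes dM)\otimes I_{\Delta}^{p}/I_{\Delta}^{p+1}\to 0,$$
it suffices to prove
$$H^{1}\big(A\times A,\ (cL\boxtimes dM)\otimes I_{\Delta}^{p+1}\big)=0.$$
Since $L$ and $M$ are algebraically equivalent, we may write $M\simeq L\otimes P_{\alpha_0}$. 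Twists by $\Pic^0$ will be harmless throughout, because the hypotheses below are uniform in such twists.

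Next I would apply the isogeny
$$\varphi: A\times A\to A\times A,\qquad \varphi(u,v)=(u+dv,\ u-cv).$$
By the theorem of the square, the pullback of $c\ell\boxtimes d\ell$ is computed via the quadratic form. In $c(u+dv)^2+d(u-cv)^2$ the cross terms $2cd\,uv-2cd\,uv$ cancel, so
$$\varphi^{\ast}(cL\boxtimes dM)\simeq \big(L^{c+d}\boxtimes L^{cd(c+d)}\big)\otimes P$$
for some $P\in\Pic^0(A\times A)$, which I will track only as a twist. The preimage of $\Delta$ is $\{(c+d)v=0\}=A\times A[c+d]$. Since $\varphi$ is étale, $\varphi^{-1}I_{\Delta}^{p+1}$ equals $\mathrm{pr}_2^{\ast}\big((c+d)_A^{\ast}I_0^{p+1}\big)$. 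The pullback is therefore an exterior product
$$\big(L^{c+d}\otimes P'\big)\ \boxtimes\ \Big((c+d)_A^{\ast}I_0^{p+1}\otimes L^{cd(c+d)}\otimes P''\Big).$$
The second factor is exactly the sheaf attached to the $\Q$-twisted sheaf $I_0^{p+1}\langle \tfrac{cd}{c+d}\ell\rangle$ via $b=c+d$ (up to a $\Pic^0$-twist), as in the definition of GV for $\Q$-twisted sheaves.

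By hypothesis $\beta_p(\ell)<\frac{cd}{c+d}$, so $I_0^{p+1}\langle t\ell\rangle$ is GV for some $t<\frac{cd}{c+d}$. Adding a positive rational multiple of an ample class to a GV $\Q$-twisted sheaf gives an IT(0) one; I would quote this standard fact from \cite{cohrank}, or prove it by a Fourier–Mukai/isogeny argument. Hence the second factor is IT(0), so all its $\Pic^0$-twists have vanishing $H^{i}$ for $i>0$. The first factor is an ample line bundle, so its higher cohomology also vanishes. Künneth then gives $H^1$ of the pullback $=0$.

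Finally, $\varphi$ is an isogeny and we are in characteristic zero, so $\mathcal F$ is a direct summand of $\varphi_{\ast}\varphi^{\ast}\mathcal F=\bigoplus_{\beta\in\ker\hat\varphi}\mathcal F\otimes P_\beta$, where $\mathcal F=(cL\boxtimes dM)\otimes I_\Delta^{p+1}$. Thus $H^1(\mathcal F)$ injects into $H^1(\varphi^{\ast}\mathcal F)=0$, which proves the theorem.

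I expect the main obstacle to be the bookkeeping in the pullback step. One has to check carefully that $\varphi^{\ast}(cL\boxtimes dM)$ really splits as an exterior product up to $\Pic^0$ when $L$ is not symmetric, and that the $\Pic^0$-twists appearing are absorbed by the IT(0) property. The latter is why IT(0) rather than mere GV is needed, and hence why the inequality must be strict.
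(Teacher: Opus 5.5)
Your proof is correct and complete. The paper gives no proof of this statement; it is quoted from \cite{jets}, so there is no internal argument to compare yours with. What you give is the standard isogeny argument, the $(c,d)$-version of the trick used for projective normality by Kempf and by Jiang--Pareschi. Your isogeny $\varphi(u,v)=(u+dv,u-cv)$ turns $I_{\Delta}^{p+1}$ into $\mathcal{O}_A\boxtimes (c+d)_A^{\ast}I_0^{p+1}$ and $c\ell\boxtimes d\ell$ into $(c+d)\ell\boxtimes cd(c+d)\ell$, and each step checks out.

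Two small comments. First, the obstacle you anticipate is not one. A line bundle on $A\times A$ is determined by its class in $\NS(A\times A)$ up to $\Pic^{0}(A\times A)=\Pic^{0}A\times\Pic^{0}A$. So your computation of the class already gives the exterior-product splitting up to a $\Pic^{0}$-twist, with no symmetry assumption on $L$.

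Second, the step from GV at some rational $t<\frac{cd}{c+d}$ to IT(0) at $\frac{cd}{c+d}$ is just the fact that a GV sheaf tensored with an ample line bundle is IT(0). You apply it after writing $t$ and $\frac{cd}{c+d}$ over a common denominator. IT(0), like GV, does not depend on the chosen fraction, by the same isogeny-splitting argument you use at the end.
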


On the other hand, in \cite{BEL}, Bertram, Ein and Lazarsfeld proved the following:

\begin{theorem}
Let $C$ be a smooth curve of genus $h.$  Let $L,M$ be line bundles on $C.$ 
\begin{enumerate}[label=\alph*)]
\item Assume that $\operatorname{deg} L,\operatorname{deg} M \geq (p+1)(h+1)$ with $\operatorname{deg} L + \operatorname{deg} M > 2(p+1)(h+1).$ Then the Gau\ss-Wahl map $\gamma_{L,M}^{p}$ is surjective.  
\item Assume that $C$ is hyperelliptic. Write $\iota:C\rightarrow C$ for the hyperelliptic involution and $R$ for the ramification divisor. If $\operatorname{deg} L,\operatorname{deg} M \geq (p+1)(h+1)$ and  
$$H^{0}(C,\omega_{C}((p+1)R)\otimes L^{\vee}\otimes\iota^{\ast}M^{\vee})\neq 0,$$
then the Gau\ss-Wahl map $\gamma_{L,M}^{p}$ is NOT surjective
\end{enumerate}
\end{theorem}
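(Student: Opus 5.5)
The statement just recalled is the theorem of Bertram, Ein and Lazarsfeld from \cite{BEL}. To prove it, I would translate both parts into $H^1$ statements on the surface $C\times C$. There $I_{\Delta}^{p}=\mathcal{O}(-p\Delta)$, and the exact sequence
$$0\to (L\boxtimes M)(-(p+1)\Delta)\to (L\boxtimes M)(-p\Delta)\to (L\otimes M\otimes\omega_C^{p})|_{\Delta}\to 0$$
shows that $\operatorname{coker}\gamma^{p}_{L,M}$ equals the kernel of
$$\phi:H^1((L\boxtimes M)(-(p+1)\Delta))\to H^1((L\boxtimes M)(-p\Delta)).$$
So surjectivity follows from $H^1((L\boxtimes M)(-(p+1)\Delta))=0$. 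Non-surjectivity amounts to producing a nonzero class in $\ker\phi$.

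For a), I would compute $H^1$ by pushing forward along the first projection $p_1$. By hypothesis $\deg M-(p+1)\ge (p+1)h>2h-2$, so $M(-(p+1)x)$ is nonspecial for every $x\in C$. Hence $R^1p_{1*}=0$, and $p_{1*}\big(p_2^*M(-(p+1)\Delta)\big)=:E_M$ is a vector bundle. It is the kernel of the evaluation of $p$-jets,
$$0\to E_M\to H^0(M)\otimes\mathcal{O}_C\to J^{p}(M)\to 0,$$
which is surjective since $M$ separates $p$-jets ($\deg M\ge 2h+p$). Therefore $H^1((L\boxtimes M)(-(p+1)\Delta))=H^1(C,L\otimes E_M)$, and one must show this vanishes. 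Its rank and degree are
$$\operatorname{rk}E_M=\deg M+1-h-(p+1),\qquad \deg E_M=-(p+1)\deg M-p(p+1)(h-1).$$
I would establish a stability-type statement for $E_M$, or more robustly a filtration of $E_M$ by kernel bundles of the form $M_{M(-ix)}$, in the spirit of Butler and Ein--Lazarsfeld. The target is to show that every quotient of $E_M$ has slope $>-\deg L+2h-2$, which forces $H^1(L\otimes E_M)=0$ by Serre duality. The numerical hypotheses $\deg L,\deg M\ge (p+1)(h+1)$ with strict inequality for the sum should be exactly what makes the slope inequality hold. The symmetric roles of $L$ and $M$ are used to handle the boundary case.

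\textbf{The main obstacle} is this vanishing with the sharp constant $(p+1)(h+1)$: naive nefness arguments on $C\times C$ via Kawamata--Viehweg lose too much. I would first try an induction on $p$, peeling off one copy of $\Delta$ at a time and using the $p=0$ case (normal generation, $\deg\ge 2h+1$) as the base.

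For b), I would use the graph $\Gamma=\Gamma_\iota\subset C\times C$ of the hyperelliptic involution. It satisfies $\Gamma\cong C$ and $\Gamma\cap\Delta=R$ transversally, with $\deg R=2h+2$. Then
$$(L\boxtimes M)(-(p+1)\Delta)|_{\Gamma}\cong L\otimes\iota^*M(-(p+1)R),$$
whose $H^1$ is dual to $H^0(\omega_C((p+1)R)\otimes L^\vee\otimes\iota^*M^\vee)\neq0$ by hypothesis. On the other hand, $(L\boxtimes M)(-p\Delta)|_\Gamma$ has degree $\ge 2h+2>2h-2$, so its $H^1$ vanishes. Restriction to $\Gamma$ is surjective on $H^1$, because the relevant $H^2$ of the twist by $-\Gamma$ is dual to a section of a negative-degree bundle. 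So I would look for a class in $H^1((L\boxtimes M)(-(p+1)\Delta))$ lifting a nonzero class on $\Gamma$ and lying in $\ker\phi$.

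The cleanest route is dual. By Serre duality, $(\operatorname{coker}\gamma)^\vee$ is the cokernel of $H^1(K\otimes(L\boxtimes M)^\vee(p\Delta))\to H^1(K\otimes (L\boxtimes M)^\vee((p+1)\Delta))$. I would construct the required element directly as the image of the section on $\Gamma$ under the coboundary from $\mathcal{O}_\Gamma$. Then I would check, via the diagram comparing the two restrictions to $\Gamma$, that it is not in the image. This works because the $p\Delta$-twist restricts to $\Gamma$ with no $H^0$ in the dual.
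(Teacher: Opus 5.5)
The paper gives no proof of this theorem; it is quoted from \cite{BEL}. Judged on its own, your proposal has a genuine gap in a), and in fact a) cannot be proved as printed. Your reduction to $H^{1}(C,L\otimes E_{M})=0$ is fine for $p\geq 1$. The decisive step, however, is only announced: that every quotient of $E_{M}$ has slope $>2h-2-\deg L$, which you say the hypotheses ``should be exactly'' what guarantees. That step is false, because a) itself is false with the sum condition $\deg L+\deg M>2(p+1)(h+1)$.

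The simplest counterexample is $p=0$, $h=2$, $L=M$ of degree $4$. Then $4\geq 3=(p+1)(h+1)$ and $8>6$. But $\gamma^{0}_{L,L}$ is the multiplication map, which factors through $\mathrm{Sym}^{2}H^{0}(L)$, and $\dim\mathrm{Sym}^{2}H^{0}(L)=6<7=h^{0}(L^{\otimes 2})$.

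For every $p$ and every hyperelliptic $C$ of genus $h\geq 2$, your own part b) produces counterexamples. Write $\mathcal{H}$ for the $g^{1}_{2}$. Then $\omega_{C}\cong\mathcal{H}^{\otimes(h-1)}$, $\mathcal{O}_{C}(R)\cong\mathcal{H}^{\otimes(h+1)}$ and $L\otimes\iota^{\ast}L\cong\mathcal{H}^{\otimes\deg L}$. So for $L=M$ of degree $(p+1)(h+1)+1$, the bundle $\omega_{C}((p+1)R)\otimes L^{\vee}\otimes\iota^{\ast}L^{\vee}\cong\mathcal{H}^{\otimes(h-2)}$ is effective, and $\gamma^{p}_{L,L}$ is not surjective. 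This is Corollary \ref{notsurjhyp} with $r=1$; as printed, a) would even contradict the equality case of \eqref{BEL-inequality}.

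In these examples $H^{1}(L\otimes E_{M})\neq 0$. Equivalently, $E_{M}$ has a line-bundle quotient of degree $\leq 2h-2-\deg L$, so no stability or filtration argument can give your vanishing. The sum hypothesis must be strengthened at least to $\deg L+\deg M>\deg\omega_{C}((p+1)R)=2(p+1)(h+1)+2h-2$. This is the threshold past which the obstruction in b) cannot occur; for $p=1$ it is BEL's bound $\deg L+\deg M\geq 6h+3$.

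Even for the corrected statement, your a) is only a plan. The sharp vanishing is exactly the content of \cite{BEL}, and it cannot come from semistability of $E_{M}$. Indeed, for $p\geq 1$ and $C$ hyperelliptic of genus $h\geq 2$, $E_{M}$ is not semistable when $\deg M=(p+1)(h+1)+h-1$. Its slope is then $-2(p+1)$, so semistability would force $H^{1}(M\otimes E_{M})=0$, contradicting b) with $L=M$.

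Your b) is essentially right for $p\geq 1$ and gives a reasonable self-contained proof. Set $G=K_{C\times C}\otimes(L\boxtimes M)^{\vee}$. The coboundary $\delta(s)\in H^{1}(G((p+1)\Delta))$ is nonzero because $G((p+1)\Delta+\Gamma)$ has negative degree on the fibres $\{x\}\times C$. To conclude that $\delta(s)$ is not in the image of $H^{1}(G(p\Delta))$, you need two vanishings. One is $H^{0}(\omega_{C}\otimes L^{\vee}\otimes\iota^{\ast}M^{\vee}(pR))=0$, which you note. The other, missing from your sketch but true, is that $G((p+1)\Delta+\Gamma)|_{\Delta}$ has negative degree, so that $H^{1}(G(p\Delta+\Gamma))\to H^{1}(G((p+1)\Delta+\Gamma))$ is injective.

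For $p=0$ the fibre negativity fails. Take $L=M=\mathcal{H}^{\otimes h}$: since $\Delta+\Gamma$ is the pullback of the diagonal of $\Ps^{1}\times\Ps^{1}$, one has $G(\Delta+\Gamma)\cong\mathcal{O}_{C\times C}$. Then $\delta(s)=0$, so that case needs a different argument.
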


Regarding b), it is worth to point out the following more manageble consequence:
\begin{corollary}
\label{notsurjhyp}
Let $L$ be a line bundle on a smooth hyperelliptic curve of genus $h.$ Assume that 
$$\operatorname{deg} L \leq(p+1)(h+1)+r,$$
with $0\leq r\leq h-1.$ Then the Gau\ss-Wahl map $\gamma_{L,L}^{p}$ is not surjective. 
\end{corollary}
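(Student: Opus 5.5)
The plan is to deduce the statement from part b) of the Bertram–Ein–Lazarsfeld theorem with $M=L$. That reduces everything to showing
$$H^{0}\bigl(C,\omega_{C}((p+1)R)\otimes L^{\vee}\otimes\iota^{\ast}L^{\vee}\bigr)\neq 0$$
whenever $(p+1)(h+1)\leq \deg L\leq (p+1)(h+1)+h-1$. The degree range $\deg L<(p+1)(h+1)$ has to be handled separately, and I expect that to be the main obstacle.

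\emph{Main range.} Let $H$ be the $g^{1}_{2}$ on $C$ and $\pi:C\to\Ps^{1}$ the double cover, so that $H=\pi^{\ast}\mathcal{O}_{\Ps^{1}}(1)$. On a hyperelliptic curve we have
$$\omega_{C}\simeq H^{\otimes(h-1)},\qquad \mathcal{O}_{C}(R)\simeq H^{\otimes(h+1)}.$$
The second isomorphism holds because $R$ consists of the $2h+2$ Weierstrass points, $\pi_{\ast}R$ is the branch divisor of degree $2h+2$, and each Weierstrass point $w$ satisfies $2w\in|H|$. For any line bundle $L$ of degree $d$ we also have
$$L\otimes\iota^{\ast}L\simeq\pi^{\ast}\mathrm{Nm}_{\pi}(L)\simeq\pi^{\ast}\mathcal{O}_{\Ps^{1}}(d)=H^{\otimes d}.$$
Combining these, the relevant bundle is
$$\omega_{C}((p+1)R)\otimes L^{\vee}\otimes\iota^{\ast}L^{\vee}\simeq H^{\otimes\,(h-1)+(p+1)(h+1)-d}.$$
Write $d=(p+1)(h+1)+r$. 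The exponent is then $h-1-r$, which is $\geq 0$ exactly when $r\leq h-1$. A nonnegative power of the base-point-free pencil $H$ has a nonzero section, so the nonvanishing hypothesis of part b) holds. Part b) then gives that $\gamma^{p}_{L,L}$ is not surjective for $0\leq r\leq h-1$. The only point to check is that these identifications are genuine isomorphisms of line bundles, not just equalities of degrees; the three displayed isomorphisms above ensure this.

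\emph{Low degrees.} The statement as written also allows $\deg L<(p+1)(h+1)$, where part b) does not apply directly. My first approach would be to rerun the Bertram–Ein–Lazarsfeld obstruction itself rather than quote the theorem. Their nonsurjectivity comes from pairing the cokernel of $\gamma^{p}_{L,M}$ against a section supported near the graph $\Gamma_{\iota}\subset C\times C$ of the involution. Concretely, the cokernel of $\gamma^{p}_{L,L}$ is the kernel of
$$H^{1}(C\times C,(L\boxtimes L)(-(p+1)\Delta))\longrightarrow H^{1}(C\times C,(L\boxtimes L)(-p\Delta)).$$
Restricting along $\Gamma_{\iota}$, which meets $\Delta$ exactly at the points of $R$, reproduces the bundle $\omega_{C}((p+1)R)\otimes L^{\vee}\otimes\iota^{\ast}L^{\vee}$ by Serre duality. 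The lower bound on degrees in part b) should only be needed there to guarantee vanishings that make this dual class land in the kernel. For $d<(p+1)(h+1)$ I would instead aim for a direct dimension or vanishing argument showing the cokernel is nonzero, since the power $H^{\otimes(h-1+(p+1)(h+1)-d)}$ is then even more positive.

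If that turns out to be delicate, the fallback is to observe where the corollary is actually used. It feeds into the lower bound $e_{p}(C)\geq(p+1)(h+1)+h-1$ and Theorem \ref{general}, which only require the main range $(p+1)(h+1)\leq\deg L$, and that range is fully settled by the computation above.
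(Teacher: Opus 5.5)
Your main-range argument is the paper's proof. The corollary is stated there without proof, as an immediate consequence of part b) of the Bertram--Ein--Lazarsfeld theorem with $M=L$, and its whole content is your identification $\omega_{C}((p+1)R)\otimes L^{\vee}\otimes\iota^{\ast}L^{\vee}\simeq H^{\otimes(h-1-r)}$ for $\deg L=(p+1)(h+1)+r$.

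One justification needs tightening. The relation $2w\in|H|$ for each Weierstrass point only gives $\mathcal{O}_{C}(2R)\simeq H^{\otimes(2h+2)}$, so it determines $\mathcal{O}_{C}(R)$ only up to a $2$-torsion class. At $r=h-1$, the case that yields $e_{p}(C)=(p+1)(h+1)+h-1$, a nontrivial $2$-torsion twist would make $H^{0}$ vanish. Use Riemann--Hurwitz instead: $\omega_{C}\simeq\pi^{\ast}\mathcal{O}_{\Ps^{1}}(-2)\otimes\mathcal{O}_{C}(R)$, hence $\mathcal{O}_{C}(R)\simeq\omega_{C}\otimes H^{\otimes 2}\simeq H^{\otimes(h+1)}$.

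Drop the low-degree plan. Read literally with ``$\leq$'', the statement is false below $(p+1)(h+1)$, so no argument can cover that range. If $2\deg L+p(2h-2)<0$, the target $H^{0}(C,L^{\otimes 2}\otimes\omega_{C}^{\otimes p})$ of $\gamma^{p}_{L,L}$ is zero, so the map is trivially surjective even though $\deg L\leq(p+1)(h+1)$. For $p=0$, already $L=\mathcal{O}_{C}$ gives the isomorphism $H^{0}(\mathcal{O}_{C})\otimes H^{0}(\mathcal{O}_{C})\to H^{0}(\mathcal{O}_{C})$.

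In these examples the bundle $\omega_{C}((p+1)R)\otimes L^{\vee}\otimes\iota^{\ast}L^{\vee}$ is as positive as you like. So your heuristic that extra positivity on $\Gamma_{\iota}$ still forces a nonzero cokernel is wrong, and the lower bound $\deg L\geq(p+1)(h+1)$ in part b) cannot simply be dropped. The hypothesis must be read as $\deg L=(p+1)(h+1)+r$ with $0\leq r\leq h-1$, which is all the paper uses. Your fallback is the right resolution, and for that statement your proof is complete once the $\mathcal{O}_{C}(R)$ step is fixed as above.
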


In this context, the invariant
$$e_{p}(C) = \max\{e\in\Z_{\geq 0} : \gamma_{L,L}^{p} \hspace{0.1cm}\text{is not surjective for any $L\in\Pic(C)$ with $\operatorname{deg}L = e$}\}$$
considered in the introduction, satisfy the inequality 
\begin{equation}
\label{BEL-inequality}
e_{p}(C) \leq (p+1)(g(C)+1) + g(C) -1,
\end{equation}
with equality if and only if $C$ is hyperelliptic. 

\section{Gau\ss-Wahl maps on curves in abelian varieties}

Let $A$ be an abelian variety and $Y\subseteq A$ a smooth subvariety. As in \cite[4.]{Faro-Spelta}, we can consider the obvious commutative diagram induced by restriction:
\begin{equation*}
\xymatrix{ \operatorname{Rel}_{A}^{p}(L,M) \ar[rrr]^-{\gamma_{L,M}^{p}}\ar[dd] & & & H^{0}\left(L\otimes M\otimes\mathrm{Sym}^{p}\Omega_{A}\right) \ar[d]^-{u} \\
& & &  H^{0}(\left.L\right|_{Y}\otimes\left.M\right|_{Y}\otimes\mathrm{Sym}^{p}\left.\Omega_{A}\right|_{Y})\ar[d]^-{v} \\
\operatorname{Rel}_{Y}^{p}(\left.L\right|_{Y},\left.M\right|_{Y})\ar[rrr]_-{\gamma^{p}_{\left.L\right|_{Y},\left.M\right|_{Y}}} & & & H^{0}\left(\left.L\right|_{Y}\otimes\left.M\right|_{Y}\otimes\mathrm{Sym}^{p}\Omega_{Y}\right)}
\end{equation*}
In particular, if $\gamma_{L,M}^{p}, u$ and $v$ are surjective, then $\gamma^{p}_{\left.L\right|_{Y},\left.M\right|_{Y}}$ is also surjective. The point here is that the surjectivity of $u$ and $v$ can be controlled by certain specific vanishings. Concretely, as $\Omega_{A}$ is free, it follows that the surjectivity of $u$ is ensured whenever 
\begin{equation}
\label{anulamiento-ideal}
H^{1}\left(A,I_{Y}\otimes L\otimes M\right) = 0,
\end{equation}
where $I_{Y}$ is the ideal sheaf of $Y.$ The surjectivity of $v$ is a bit more subtle. Write
$$Q_{0}(p) = \ker\left[\operatorname{Sym}^{p}\left.\Omega_{A}\right|_{Y}\twoheadrightarrow\operatorname{Sym}^{p}\Omega_{Y}\right].$$
In this context, the surjectivity of $v$ is ensured by the vanishing
\begin{equation}
\label{anulamiento-nucleo}
H^{1}\left(Y, Q_{0}(p)\otimes \left.(L\otimes M)\right|_{Y}\right) = 0.
\end{equation}
Of course, the sheaf $Q_{0}(p)$ is in principle a bit misterious. However, we can be a bit more precise. Indeed, taking symmetric product to the conormal sequence, we get the following long exact sequence (\cite[I.4.3.1.7]{Illusie}):
$$0\longrightarrow\omega_{Y}\otimes S^{p-c}\left.\Omega_{A}\right|_{Y}\longrightarrow\bigwedge^{c-1}N_{Y/A}^{\vee}\otimes S^{p-c+1}\left.\Omega_{A}\right|_{Y}\longrightarrow \cdots$$
$$\longrightarrow\bigwedge^{2}N_{Y/A}^{\vee}\otimes S^{p-2}\left.\Omega_{A}\right|_{C}\longrightarrow N_{Y/A}^{\vee}\otimes S^{p-1}\left.\Omega_{A}\right|_{Y}\longrightarrow S^{p}\left.\Omega_{A}\right|_{Y}\longrightarrow S^{p}\Omega_{Y}\longrightarrow 0,$$
where $c = \operatorname{codim}_{A}(Y).$ If we write
$$Q_{1}(p) = \ker\left[N_{Y/A}^{\vee}\otimes S^{p-1}\left.\Omega_{A}\right|_{Y}\longrightarrow S^{p}\left.\Omega_{A}\right|_{Y}\right],$$
then we have an exact sequence 
$$0\longrightarrow Q_{1}(p)\longrightarrow N_{Y/A}^{\vee}\otimes S^{p-1}\left.\Omega_{A}\right|_{Y}\longrightarrow Q_{0}(p)\longrightarrow 0.$$
Tensoring with $\left.(L\otimes M)\right|_{Y}$ and taking cohomology we obtain the exact sequence of cohomology groups on $Y$:
$$H^{1}\left(N_{Y/A}^{\vee}\otimes S^{p-1}\left.\Omega_{A}\right|_{Y}\otimes\left.(L\otimes M)\right|_{Y}\right)\rightarrow H^{1}\left(Q_{0}(p)\otimes \left.(L\otimes M)\right|_{Y}\right)\rightarrow H^{2}\left(Q_{1}(p)\otimes \left.(L\otimes M)\right|_{Y}\right).$$
In particular, if $Y = C$ is a curve, as $\Omega_{A}$ is trivial, in order to obtain \eqref{anulamiento-nucleo}, it is enough to have the following vanishing, which does not depend on $p$:
\begin{equation}
\label{anulamiento-conormal} 
H^{1}\left(C,N_{C/A}^{\vee}\otimes \left.(L\otimes M)\right|_{C}\right) = 0.
\end{equation}

Summarizing, we have:

\begin{prop}
\label{sobreyectividad-subvariedades}
Let $C$ be a smooth curve contained in an abelian variety $A.$ Let $L,M$ be ample line bundles on $A$ such that 
$$H^{1}(A,I_{C}\otimes L\otimes M) = H^{1}(C,N_{C/A}^{\vee}\otimes\left.(L\otimes M)\right|_{C}) = 0.$$ 
Then for any $p\in\Z_{\geq 0},$ the surjectivity of $\gamma_{L,M}^{p}$ implies the surjectivity of $\gamma^{p}_{\left.L\right|_{Y},\left.M\right|_{Y}}.$
\end{prop}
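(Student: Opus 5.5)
The proof is a diagram chase on the commutative square displayed at the start of this section, together with the two cohomological criteria for surjectivity of $u$ and $v$ derived just before the statement. Given the two vanishings in the hypothesis, it suffices to show that $u$ and $v$ are surjective. Indeed, if $\gamma_{L,M}^{p}$ is surjective, then $v\circ u\circ\gamma^{p}_{L,M}$ is surjective. By commutativity this composite equals $\gamma^{p}_{L|_C,M|_C}\circ \mathrm{res}$, where $\mathrm{res}:\operatorname{Rel}^p_A(L,M)\to\operatorname{Rel}^p_C(L|_C,M|_C)$ is induced by restriction to $C\times C$. Hence $\gamma^{p}_{L|_C,M|_C}$ is surjective.

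\emph{Surjectivity of $u$.} Since $\Omega_A\simeq\mathcal{O}_A^{\oplus g}$, the map $u$ is a direct sum of copies of the restriction $H^0(A,L\otimes M)\to H^0(C,(L\otimes M)|_C)$. Each copy sits in the long exact sequence of
$$0\to I_C\otimes L\otimes M\to L\otimes M\to (L\otimes M)|_C\to 0,$$
so its cokernel injects into $H^1(A,I_C\otimes L\otimes M)$, which vanishes by hypothesis.

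\emph{Surjectivity of $v$.} The map $v$ comes from the surjection $S^p\Omega_A|_C\to S^p\Omega_C$ with kernel $Q_0(p)$. On a curve the cokernel of $v$ injects into $H^1(C,Q_0(p)\otimes(L\otimes M)|_C)$, so it is enough to kill this group. Since $C$ is a curve, $H^2$ of any coherent sheaf on $C$ vanishes. The exact sequence $0\to Q_1(p)\to N^\vee_{C/A}\otimes S^{p-1}\Omega_A|_C\to Q_0(p)\to 0$ therefore gives a surjection
$$H^1(N^\vee_{C/A}\otimes S^{p-1}\Omega_A|_C\otimes(L\otimes M)|_C)\twoheadrightarrow H^1(Q_0(p)\otimes(L\otimes M)|_C).$$
As $\Omega_A$ is trivial, $S^{p-1}\Omega_A|_C$ is a trivial bundle of rank $\binom{g+p-2}{p-1}$. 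The left-hand group is thus a direct sum of copies of $H^1(C,N^\vee_{C/A}\otimes(L\otimes M)|_C)$, which is zero by hypothesis. For $p=0$ the map $v$ is the identity, and for $p=1$ we have $Q_0(1)=N^\vee_{C/A}$ directly, so these edge cases are consistent with the argument.

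The one point needing care is the identification of $Q_0(p)$ as a quotient of $N^\vee_{C/A}\otimes S^{p-1}\Omega_A|_C$. Concretely, $S^p\Omega_A|_C\to S^p\Omega_C$ is surjective with kernel equal to the image of multiplication $N^\vee\otimes S^{p-1}\Omega_A|_C\to S^p\Omega_A|_C$. This holds locally because $0\to N^\vee\to\Omega_A|_C\to\Omega_C\to0$ is a sequence of vector bundles, hence locally split, and it is the exactness recorded via \cite{Illusie} above. Everything else is formal.
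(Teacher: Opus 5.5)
Your proof is correct and follows the paper's own route. You use the restriction square, get surjectivity of $u$ from $H^{1}(A,I_{C}\otimes L\otimes M)=0$ and the triviality of $\Omega_{A}$, and get surjectivity of $v$ from the sequence $0\to Q_{1}(p)\to N^{\vee}_{C/A}\otimes S^{p-1}\Omega_{A}|_{C}\to Q_{0}(p)\to 0$ together with the vanishing of $H^{2}$ on a curve; your explicit handling of $p=0,1$ and of the local splitting identifying $Q_{0}(p)$ only spells out what the paper leaves implicit.
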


At this point, we prove Theorem \ref{A} and \ref{B} from the introduction:

\begin{theorem}
\label{principal}
Let $(A,\theta)$ be a (non-necessarily principally) polarized abelian variety and $C\subset A$ be a smooth curve contained in $A.$ Let $d:=(\theta\cdot C)$ be the degree of $C$ with respect to $\theta.$ Then we have that
$$\varepsilon(\theta)\leq 2d\cdot\liminf_{p} \frac{p+1}{e_{p}(C)}.$$
Moreover, if $C$ is hyperelliptic then we get 
$$\varepsilon(\theta) \leq \frac{2d}{h+1},$$
where $h$ denotes the genus of $C.$
\end{theorem}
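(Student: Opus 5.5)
We will combine the Seshadri-constant formula \eqref{seshadri-limit} with Theorem \ref{surjGW} and Proposition \ref{sobreyectividad-subvariedades}. Fix $L$ representing $\theta$ and an integer $n\geq 1$, and apply Theorem \ref{surjGW} with $c=d'=n$, so $cd'/(c+d')=n/2$. Whenever $\beta_p(\theta)<n/2$, the map $\gamma^p_{nL,nL}$ on $A$ is surjective. The plan is to transfer this surjectivity to $C$ and then compare with the definition of $e_p(C)$.

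\textbf{Transfer to $C$.} We need the two vanishings of Proposition \ref{sobreyectividad-subvariedades} for $L^{\otimes n}\otimes L^{\otimes n}=L^{\otimes 2n}$, and also for all its translates by $P_\alpha\in\Pic^0A$, since $e_p(C)$ quantifies over all line bundles of a given degree.
\begin{enumerate}
\item The vanishing $H^1(C,N^\vee_{C/A}\otimes L^{2n}|_C\otimes P_\alpha)=0$ holds for all $\alpha$ once $n\geq n_0$, by Serre vanishing on $C$. The set $\{L|_C\otimes P_\alpha\}$ is bounded, so $n_0$ can be chosen uniformly in $\alpha$.
\item The vanishing $H^1(A,I_C\otimes L^{2n}\otimes P_\alpha)=0$ likewise holds uniformly for $n\gg0$, for instance because $I_C\langle t\theta\rangle$ is IT(0) for $t$ large, or by Serre vanishing for the bounded family.
\end{enumerate}
Replacing $L$ by $L\otimes P_\alpha$ does not change the class $\theta$, so Theorem \ref{surjGW} still applies. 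Hence for $n\geq n_0$ and $\beta_p(\theta)<n/2$, the map $\gamma^p_{M,M}$ on $C$ is surjective for every $M=(L^n\otimes P_\beta)|_C$, where we take $L^n\otimes P_\beta=(L\otimes P_\alpha)^n$ with $n\alpha=\beta$.

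The image of $\Pic^0A\to\Pic^0C$ need not be surjective, so the restrictions $M$ do not exhaust all line bundles of degree $nd$ on $C$. This is the main obstacle. We handle it by reading $e_p(C)$ as the paper defines it: $e_p(C)$ is the largest $e$ such that $\gamma^p_{M,M}$ is non-surjective for \emph{every} $M$ of degree $e$. For each $p$, the degree $e_p(C)$ is the threshold beyond which surjectivity holds for at least one $M$ of each degree, and in the hyperelliptic case Corollary \ref{notsurjhyp} gives non-surjectivity for \emph{all} $M$ of degree up to $(p+1)(h+1)+h-1$. Consequently, exhibiting a single surjective $\gamma^p_{M,M}$ with $\deg M=2nd$ already forces $2nd>e_p(C)$, or at least $2nd$ outside the range of \emph{all}-non-surjective degrees. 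If the definition requires monotonicity in the degree, we restrict to the degrees $2nd$ and argue only with those.

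\textbf{Conclusion.} For each $p$, choose $n$ minimal with $n\geq n_0$ and $n/2>\beta_p(\theta)$. Then $2nd>e_p(C)$, so $2d(\beta_p(\theta)+\tfrac12)\cdot 2+O(d)\gtrsim e_p(C)$. More precisely, $n\leq 2\beta_p(\theta)+1$ for large $p$, because $\beta_p(\theta)\to\infty$, so $n_0$ eventually does not bind. This gives $e_p(C)<2d(2\beta_p(\theta)+1)$, and the bookkeeping must be checked carefully to recover the constant $2d$ rather than $4d$. The right normalization is presumably $c=d'$ with $L^{c}\otimes L^{d'}$ of degree $(c+d')d$ on $C$ and threshold $cd'/(c+d')=(c+d')/4$ when $c=d'$. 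Taking $c=d'=n$ yields degree $2nd$ against threshold $n/2$. Thus $e_p(C)< 2nd\le 4d\beta_p+O(d)$, and the fix is to use the sharper unequal choices or $\mathbb{Q}$-twisted versions to get $e_p(C)\le 2d\cdot 2\beta_p/ \ldots$. We then divide by $p+1$ and take $\liminf$ using \eqref{seshadri-limit}, $\varepsilon(\theta)=\lim (p+1)/\beta_p(\theta)$. Finally, for hyperelliptic $C$ we substitute $e_p(C)=(p+1)(h+1)+h-1$ from \eqref{BEL-inequality}, so $(p+1)/e_p(C)\to 1/(h+1)$.
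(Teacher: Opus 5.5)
There is a genuine gap, and it is exactly the factor of $2$ you could not account for: you miscount the degree of $M$. The bundle $M=(L^{n}\otimes P_\beta)|_C$ has degree $nd$, not $2nd$. The number $2nd$ is the degree of $M^{\otimes 2}=L^{2n}|_C$, which is what appears in the vanishing hypotheses of Proposition \ref{sobreyectividad-subvariedades}. However, $e_p(C)$ is defined through the degree of each factor $T$ of $\gamma^p_{T,T}$; this is also how \eqref{BEL-inequality} and Corollary \ref{notsurjhyp} are phrased. Comparing $2nd$ with $e_p(C)$ only yields $\varepsilon(\theta)\le 4d\liminf_p (p+1)/e_p(C)$, hence $\varepsilon(\theta)\le 4d/(h+1)$ for hyperelliptic $C$. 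That is too weak for the statement and for everything deduced from it (Theorem \ref{B}, Corollary \ref{castelnuovo}). The remedies you suggest do not help. For $c\neq d'$ the restricted map is $\gamma^p_{cL|_C,\,d'L|_C}$, which involves two different bundles and so is not controlled by $e_p(C)$. In any case $cd'/(c+d')\le (c+d')/4$, so equal weights are already optimal. $\Q$-twists are irrelevant to what is just a counting error.

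With the degree corrected your argument closes, and it is simply the contrapositive of the paper's proof. Take $n$ minimal with $n>2\beta_p(\theta)$. This $n$ is automatically $\ge n_0$ for $p\gg0$, since $\beta_p(\theta)\ge (p+1)/\varepsilon(\theta)$ by \eqref{seshadri-limit}; this is a cleaner way to dispose of $n_0$ than the paper's claim that $m(p)\ge m_0$. Surjectivity of $\gamma^p_{L^n|_C,L^n|_C}$ then gives $e_p(C)<nd\le (2\beta_p(\theta)+1)d$, i.e.\ $\beta_p(\theta)>\tfrac12\bigl(e_p(C)/d-1\bigr)$. This is the paper's inequality $\beta_p(L)\ge m(p)/2\ge\tfrac12\bigl(e_p(C)/d-1\bigr)$, which it obtains from $m(p)=\max\{m: md\le e_p(C)\}$ and the non-surjectivity of $\gamma^p_{m(p)L,m(p)L}$ on $A$. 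Dividing by $p+1$ and passing to the limit gives the constant $2d$.

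Two smaller remarks. First, the uniformity over $P_\alpha$ is unnecessary, since all you use is a single surjective $\gamma^p_{T,T}$ in degree $nd$. Second, the step ``one surjective map in degree $nd$ forces $nd>e_p(C)$'' requires that non-surjectivity for all $T$ persists in every degree $\le e_p(C)$. The paper's proof relies on the same fact implicitly, and in the hyperelliptic case it is supplied by Corollary \ref{notsurjhyp}.
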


\begin{proof}

Write $L$ for a line bundle on $A$ representing the polarization $\theta.$ By Serre vanishing, there exists a positive integer $m_{0}$ such that 
$$H^{1}(A,I_{C}\otimes L^{\otimes 2m}) = H^{1}(C,N_{C/A}^{\vee}\otimes\left.L^{\otimes 2m}\right|_{C}) = 0$$
for any $m\geq m_{0}.$ Let $d = (L\cdot C)$ be the degree $L$ in $C.$ For $p\in\Z_{\geq 0},$ define:
$$m(p):= \max\{m\in\Z_{\geq 0} : md \leq e_{p}(C)\}$$
(such maximum exists, by \eqref{BEL-inequality}). Moreover, we claim that for all $p\gg 0$ we have that $m(p)\geq m_{0}.$ Indeed, the opposite would be that for all line bundles $T$ of degree $m_{0}d$ on $C,$ the Gau\ss-Wahl map $\gamma_{T,T}^{p}$ is surjective for infinitely many $p.$ However, the source of $\gamma_{T,T}^{p}$ is included in $H^{0}(C\times C,T\boxtimes T),$ while the dimension of the target increases arbitrarily with $p,$ which proves the claim. In particular, by Proposition \ref{sobreyectividad-subvariedades}, the Gau\ss-Wahl map (on $A$) $\gamma_{m(p)L,m(p)L}^{p}$ is not surjective and thus, by Theorem \ref{surjGW} we have the inequality 
$$\beta_{p}(L) \geq \frac{m(p)}{2} \geq \frac{1}{2}\left(\frac{e_{p}(C)}{d}-1\right).$$
Since this inequality holds for arbitrarily big $p,$ the result follows from \eqref{seshadri-limit} by dividing by $p+1$ and taking $\limsup.$

If $C$ is hyperelliptic, the claimed inequality directly follows from Proposition \ref{notsurjhyp} above, which asserts that $e_{p}(C) = (p+1)(h+1)+h-1$ in that case.  

\end{proof}

\begin{comentario}
Since for a very general p.p.a.v. we have $\Gamma_{00}(A,\theta) = \{0\}$  (\cite{BD-theta}), the above theorem also proves that a very general p.p.a.v. does not contain low-degree hyperelliptic curves. This gives a weak version of Pirola's theorem (\cite{Pirola}).
\end{comentario}

Note that if $C$ is a smooth curve contained in $A,$ then, by definition \eqref{seshadridef}, we have the loose inequality $\varepsilon(\ell)\leq (\ell\cdot C).$ On the other hand, by \eqref{campana}, in order to find good upper bounds for $\varepsilon(\ell),$ one needs to look for higher-dimensional subvarieties that are quite singular at a given point. The above theorem says that, in the case of abelian varieties, the intrinsic geometry of smooth curves inside $A$ gives non-trivial upper bounds for the Seshadri constant. Roughly speaking, for more special curves, the numbers $e_{p}(C)$ should be closer to Bertram-Ein-Lazarsfeld bound, and thus this gives constrains for the possible embeddings of curves in abelian varieties. In this context, it is worth to raise the following question about smooth curves:
\begin{pregunta}
\label{pregunta-gauss}
Let $C$ be a smooth projective curve. Find upper bounds for $e_{p}(C)$ in terms of intrinsic invariants of $C$ (e.g the gonality or Clifford index). In other words: construct non-surjective $p$-th order Gau\ss-Wahl maps on $C.$
\end{pregunta}

This question is addressed for $p=1$ in \cite[3.(D)]{pareschi-gauss}. Nevertheless, it is not obvious if the results exposed there are sharp and, more importantly, it is not clear whether it is possible to extend them to higher-order Gau\ss-Wahl maps. On the other hand, applying the theorem to $(JC,\theta_{C})$ and an Abel-Jacobi curve, we see that, at least asymptotically, the behavior of the Gau\ss-Wahl maps on a curve is controlled by the Seshadri constant of its polarized Jacobian. In the next section we refine the proof of Theorem \ref{principal} to this case, and render this principle effective and thus we obtain new results about the surjectivity of these maps. 

We now give a couple of illustrating examples:

\begin{ejemplo}
Note that if $(JC,\theta_{C})$ is the polarized Jacobian of a smooth hyperelliptic curve $C$ and we embed $C$ via an Abel-Jacobi map, then Theorem \ref{B} above says that 
\begin{equation*}
\varepsilon(JC,\theta_{C})\hspace{0.1cm}\leq\hspace{0.1cm} \frac{2h}{h+1}\hspace{0.1cm}<\hspace{0.1cm} 2,
\end{equation*}
which is a well known result (\cite[Proposition (ii)]{L}. Moreover, in \cite[Theorem 7]{deb} it is shown that in that case we have equality). 
\end{ejemplo}

\begin{ejemplo}
In \cite{deb}, Debarre conjectured that an indecomposable p.p.a.v.$(A,\theta)$ with $\dim A\geq 4$ and $\varepsilon(\theta)<2$ is necessarily the polarized Jacobian of a smooth hyperelliptic curve. In \cite[Corollary 1.3]{lozovanu}, Lozovanu showed that this conjecture is true if $\dim A = 4.$ In particular, Theorem \ref{principal} above implies that if $C$ is a smooth hyperelliptic curve contained in an indecomposable 4-dimensional p.p.a.v., then $g(C)\leq (\theta\cdot C),$ with equality if and only if $(A,\theta)$ is the polarized jacobian of a smooth hyperelliptic curve $D.$ Moreover, if there is equality then $\varepsilon(JD,\theta_{D})\leq 2g(C)/(g(C)+1),$ with equality if and only if $D=C.$ 
\end{ejemplo}

\begin{ejemplo}[$\subset$ Theorem 2.8 in \cite{borowka-Ortega}]
\label{caso-sup}
Let $A$ be an abelian surface with $\NS(A)=\Z\cdot\ell,$ where $\ell$ is ample. Let $C$ be any smooth hyperelliptic curve contained in $A.$ Then $g(C)\leq 5.$
\end{ejemplo}

\begin{proof}

First, as $\ell$ generates $\NS(A),$ $\ell$ is primitive and thus it has type $(1,e)$ for some $e\in\Z_{\geq 1}.$ Now, since $\NS(A)$ has rank 1, it follows that $A$ is simple and thus $g(C)\geq 2.$ In particular, $C$ is an ample divisor and thus $C\equiv m\ell$ for some $m\in\Z_{>0}.$ In particular, $\varepsilon(\mathcal{O}_{A}(C)) = m\varepsilon(\ell).$ On the other hand, by \cite[Theorem 6.1]{bauer-supalg} we know that 
$$\varepsilon(\ell)\geq \frac{2e}{\sqrt{2e+1}}.$$
In particular, since $(C\cdot\ell) = m(\ell^2) = 2me,$ combining with Theorem \ref{principal} above, we obtain the inequality
$$(g(C)+1)^{2}\leq 4(2m^{2}e+m^{2}) = 4(2g(C)-2+m^2)$$
and thus $g(C)^{2}-6g(C)+(9-4m^{2})\leq 0.$ It follows that 
$$3-2m \leq g(C)\leq 3+2m.$$
Now, as $g(C) = 1+m^{2}e,$ we have $1+m^{2}\leq g(C)\leq 3+2m,$ thus $(m-1)^{2}\leq 3$ and $m\in\{1,2\}.$ For $m = 2,$ we obtain $1+4e\leq 7,$ that is $e\leq 3/2$ and thus $e=1.$ In particular, $g(C) = 5$ in this case. On the other hand, for $m=1$ we simply obtain $e\leq 4,$ which implies $g(C)\leq 5,$ as we wanted to show.  

\end{proof}

To conclude this section, we prove and discuss the consequences of the following:

\begin{corollary}
\label{castelnuovo-ineq}
Let $C$ be a smooth hyperelliptic curve contained in a polarized abelian variety $(A,\theta).$ Then $g(C)\leq 2(\theta\cdot C) -1,$ with equality if and only if $(A,\theta)$ is isomorphic to $(E,\theta_{E})\boxtimes(B,m),$ where $\dim E=1$ and $\theta_{E}$ is principal. Moreover,  the isomorphism identifies $C$ with $E\times b,$ for some $b\in B.$ 
\end{corollary}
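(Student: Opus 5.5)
The inequality is a direct combination of Theorem \ref{principal} with Nakamaye's lower bound. Let $h=g(C)$. Theorem \ref{principal} gives $\varepsilon(\theta)\leq 2(\theta\cdot C)/(h+1)$, and Nakamaye gives $\varepsilon(\theta)\geq 1$. Together these yield $h+1\leq 2(\theta\cdot C)$, which is the stated inequality $g(C)\leq 2(\theta\cdot C)-1$.

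For the equality case in the forward direction, suppose $h=2(\theta\cdot C)-1$. The chain above collapses to $1\leq\varepsilon(\theta)\leq 1$, so $\varepsilon(\theta)=1$. Nakamaye's characterization then yields an isomorphism $(A,\theta)\simeq(E,\theta_E)\boxtimes(B,m)$ with $E$ an elliptic curve and $\theta_E$ principal.

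It remains to show that $C$ corresponds to a fibre $E\times\{b\}$; this is the step I expect to be the main obstacle. The plan is to use the projection $\pi_B\colon A\to B$ together with the degree identity
$$(\theta\cdot C)=\deg(\pi_E|_C)+(m\cdot\pi_{B*}C).$$
Here $\deg(\pi_E|_C)\cdot(\theta_E\cdot E)=\deg(\pi_E|_C)$, since $\theta_E$ has degree $1$. I then split into two cases.
\begin{enumerate}
\item If $\pi_B(C)$ is a point, then $C$ is a translate of $E$. Hence $h=1$ and $(\theta\cdot C)=1$, consistent with equality.
\item If $\pi_B(C)$ is a curve, I aim for a contradiction. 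Since $m$ is a polarization, $(m\cdot\pi_{B*}C)\geq 1$, so $(\theta\cdot C)\geq \deg(\pi_E|_C)+1$.
\begin{enumerate}
\item If $\deg(\pi_E|_C)=0$, then $C$ lies in a fibre $\{e\}\times B$. Applying the inequality inside $(B,m)$, whose Seshadri constant is still $\geq 1$, gives $h\leq 2(m\cdot C)-1$. Equality would force $(B,m)$ to split off a further elliptic factor $E'$ with $C=E'\times\{\mathrm{pt}\}$. After re-choosing the splitting $E\leftrightarrow E'$, we are back in case (1). I expect to handle this by induction on $\dim A$.
\item If $\deg(\pi_E|_C)\geq 1$, I want the strict inequality $h<2(\theta\cdot C)-1$. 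For this I compare with the curve $C$ mapped into $E\times\pi_B(C)$. More robustly, I rerun Theorem \ref{principal} against a Seshadri bound that uses $\varepsilon(\theta)\leq(\theta\cdot E\times\{b\})=1$ together with the extra term from $(m\cdot\pi_{B*}C)$. If this is not sharp enough, the fallback is Riemann--Hurwitz for the map $\pi_E|_C$ combined with the hyperelliptic structure.
\end{enumerate}
\end{enumerate}
I expect the cleanest route to the strict inequality in case (2b) is to note that equality in Theorem \ref{principal} forces $\beta_p$ to be asymptotically extremal. This makes the restriction argument of Proposition \ref{sobreyectividad-subvariedades} tight, and a comparison with the elliptic curves $E\times\{b\}$ computing $\varepsilon$ should then pin down that $C$ itself is such a curve.

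For the converse, take $C=E\times\{b\}$ with $\theta_E$ principal. Then $(\theta\cdot C)=1$ and $g(C)=1=2\cdot 1-1$, so equality holds.
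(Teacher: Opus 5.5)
\textbf{Case (2b) is a genuine gap.} Your derivation of the inequality, the deduction $\varepsilon(\theta)=1$, and the splitting via Nakamaye all match the paper. Cases (1) and (2a) are fine; (2a) plays the role of the paper's reduction to a dominant projection onto some principally polarized elliptic factor.

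The problem is case (2b), where $d_1:=\deg(\pi_E|_C)\geq 1$ and $(m\cdot\pi_{B*}C)\geq 1$. This is the heart of the identification $C=E\times\{b\}$, and none of the routes you list is carried out.

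The Seshadri/jets route cannot work in principle. Theorem \ref{principal} only sees $(\theta\cdot C)$ and $g(C)$. Moreover, $\varepsilon(\theta)=1$ is already forced by $(A,\theta)$ alone, so "asymptotic extremality of $\beta_p$" carries no information about which curve $C$ is. Concretely, take $A=E_1\times E_2$ and any polarization $af_1+bf_2$, where $f_i$ is the pull-back of a point. The fibres give $\varepsilon\leq\min(a,b)$, so the bound $\varepsilon\,(h+1)\leq 2(ad_1+bd_2)$ holds automatically when $h=2(d_1+d_2)-1$. Hence no argument of this type excludes a hyperelliptic curve of bidegree $(d_1,d_2)$ with $d_1,d_2\geq 1$ and that genus. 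Riemann--Hurwitz for $\pi_E|_C$ alone gives no upper bound on $h$ either, since the ramification is uncontrolled.

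\textbf{The missing ingredient} is the genus bound $h\leq 2d_1-1$ for a hyperelliptic curve with a degree-$d_1$ map to an elliptic curve. The paper obtains it from the Castelnuovo--Severi inequality, applied to the double cover $C\to\mathbb{P}^1$ and to $\pi_E|_C\colon C\to E$. These two maps cannot factor through a common map of degree $>1$: such a factorization of the double cover must have target $\mathbb{P}^1$, and $\mathbb{P}^1$ does not dominate $E$. This gives $h\leq 2\cdot 0+d_1\cdot 1+(2-1)(d_1-1)=2d_1-1$.

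In your case (2b) you already have $d_1\leq(\theta\cdot C)-1$. Hence $h\leq 2(\theta\cdot C)-3$, which is the required contradiction with $h=2(\theta\cdot C)-1$.

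Your ``hyperelliptic structure'' idea can also be made to work directly. The hyperelliptic involution acts as $-1$ on $JC$, so $\pi_E|_C$ intertwines it with an involution $y\mapsto c-y$ of $E$. Therefore $\pi_E|_C$ descends to a degree-$d_1$ map $\mathbb{P}^1\to\mathbb{P}^1$. Then $C$ is the normalization of the pull-back of the $4$-point-branched double cover $E\to\mathbb{P}^1$. That pull-back is branched in at most $4d_1$ points, whence $2h+2\leq 4d_1$.
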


\begin{proof}
The result follows from Theorem \ref{principal} since $\varepsilon(\theta)\geq 1.$ Now, equality is attained if and only if $\varepsilon(\theta) = 1,$ which happens if and only if $(A,\theta)$ has at least one principally polarized curve factor. In this context, the only claim that still needs proof is the fact that $C$ must be of the form $E\times b.$ To prove this, write $A = \prod_{i=1}^{r}E_{i} \times B^{\prime},$ where $r\geq 1,$ $\dim E_{i} =1,$ $\left.\theta\right|_{E_i}$ principal and $B^{\prime}$ does not have principally-polarized curve factors. Let $C\subset A$ a smooth curve with $g(C) = 2(\theta\cdot C)-1.$ First, we note that the projection $\mathrm{pr}_{i}:E_{i}\rightarrow C$ must be dominant for some $i_0.$ Indeed, if this is not the case, then $C = e\times C^{\prime}$ for some curve $C^{\prime}$ with $(\left.\theta\right|_{B^\prime}\cdot C^{\prime}) = (\theta\cdot C).$ This forces $\varepsilon(\left.\theta\right|_{B^\prime}) = 1$ and hence that $B^{\prime}$ contains a principally polarized curve factor, which is absurd. Thus, we write $A = E\times B,$ where $E=E_{i_0}$ and $B = \prod_{i\neq i_0} E_{i} \times B^{\prime}.$ Note that the degree of the projection $\mathrm{pr} : C\to E$ is $d_{1}:=(\mathrm{pr}_{i_0}^{\ast}\theta_{E_i}\cdot C),$ which satisfies $d_{1}\leq d$ with equality if and only if $C$ is contracted by the projection $A\to B.$ Finally, since $C$ is hyperelliptic, we also have a $2:1$ map $C\to\Ps^{1}.$ Next, we note that since the targets are $\Ps^{1}$ and an elliptic curve, and one of the maps has degree 2, they can not both simultaneously factor through a non-trivial morphism. In particular, from Severi-Castelnuovo inequality (\cite[Theorem 3.5.(iii)]{Ac} we obtain:
$$h=g(C)\leq 2\cdot 0 + d_{1}\cdot 1 +(d_{1}-1)\cdot(2-1) = 2d_{1}-1\leq 2d-1 = h,$$
from which we get $d_{1}=d$ and thus that $C$ is contracted by $\mathrm{pr}:A\to B,$ that is, $C = E\times b$ for some $b\in B.$
   
\end{proof}

Finally, we specialize our result to the case in which $C$ is in a multiple of the minimal class:
\begin{corollary}
Let $(A,\theta)$ be a principally polarized $a$-dimensional abelian variety and $m\in\mathbb{Z}_{\geq 1}.$ Write $\xi_{min} = \theta^{a-1}/(a-1)!\in H^{2(a-1)}(A,\Z).$ If $C$ is a smooth hyperelliptic curve such that $[C] = m\xi_{min}\in H^{2(a-1)}(A,\mathbb{Z}),$ then: 
\begin{enumerate}[label = \alph*)]
\item $g(C) \leq 2ma - 1,$ and
\item $\varepsilon(\theta)\hspace{0.1cm}<\hspace{0.1cm} 2m.$ 
\end{enumerate}
\end{corollary}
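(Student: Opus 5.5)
Everything reduces to computing the degree $d=(\theta\cdot C)$ and then applying Theorem \ref{principal} (equivalently Corollary \ref{castelnuovo-ineq}). Since $[C]=m\xi_{min}$, we have
$$d=(\theta\cdot C)=m\,\frac{\theta^{a}}{(a-1)!}.$$
Because $\theta$ is principal, $\theta^{a}=a!$ by Riemann--Roch, and hence $d=ma$.

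For a), apply Corollary \ref{castelnuovo-ineq}. It gives $g(C)\leq 2d-1=2ma-1$ at once.

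For b), Theorem \ref{principal} gives
$$\varepsilon(\theta)\leq \frac{2d}{g(C)+1}=\frac{2ma}{g(C)+1}.$$
So it suffices to know that $g(C)\geq a$, with the case $g(C)=a$ needing extra care. To get a lower bound on $g(C)$, use that $C$ generates $A$. The class $m\xi_{min}$ is nondegenerate: $C$ meets every abelian subvariety's complementary divisor positively, since $\theta$ is ample and $\xi_{min}$ pairs positively with every nonzero effective divisor class. Hence $C$ is not contained in a translate of a proper abelian subvariety. Therefore the map $JC\to A$ induced by the inclusion is surjective, which gives $g(C)\geq a$.

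This yields $\varepsilon(\theta)\leq 2ma/(a+1)<2m$ whenever $g(C)\geq a$, and $a/(a+1)<1$ makes the inequality strict. So b) holds in all cases, and the strictness needs no special treatment.

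The main point to check carefully is the generation claim, namely that a curve with class $m\xi_{min}$ cannot lie in a translate of a proper abelian subvariety $B\subsetneq A$. I would argue it directly. If $C\subset x+B$ with $\dim B<a$, choose a nonzero effective divisor $D$ pulled back from the quotient $A/B$. Then $D\cdot C=0$. On the other hand, $D\cdot\xi_{min}=(D\cdot\theta^{a-1})/(a-1)!>0$ because $\theta$ is ample. This contradicts $[C]=m\xi_{min}$. With that in hand, both parts follow as above.
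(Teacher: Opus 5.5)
Your proposal is correct and follows the paper's proof. You compute $(\theta\cdot C)=ma$, apply Corollary~\ref{castelnuovo-ineq} for a), and for b) combine Theorem~\ref{principal} with $g(C)\geq a$, which comes from $C$ generating $A$. The only difference is that the paper cites Ran for the generation statement, whereas you prove it directly: pull back an ample divisor $D$ from $A/B$ and compare $D\cdot C=0$ with $D\cdot\xi_{min}>0$, which is a valid self-contained argument.
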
  

\begin{proof}

Part a) follows immediately from Corollary \ref{castelnuovo-ineq} since a curve with $[C] = m\xi_{min}$ satisfies $(\theta\cdot C) = ma.$ For the second part, recall that a curve in $m\xi_{min}$ generates $A$ (\cite[Corollary II.2 and II.3]{Ran}) and thus $g(C)\geq a.$ In particular, 
$$\varepsilon(\theta)\leq \frac{2ma}{g(C)+1} = 2m\cdot\frac{a}{g(C)+1}<2m.$$ 

\end{proof}

It is worth to point out that, motivated by Welter's result (\cite{Welters-twice}) about curves in $2\xi_{min},$ in \cite{JSZ} Jow, Sauvaget and Zelaci conjectured that the genus of curves with class $m\xi_{min}$ should satisfy $g(C)\leq ma + P(m),$ for a polynomial $P.$ In this context, since the $a$-coefficient in our upper bound is linear in $m$, part a) of the above corollary can be thought as the first higher-dimensional result about this Conjecture (with the obvious limitation that it is valid just for hyperelliptic curves). 

Note also that part b) of the above corollary for $m=1$ could be thought as a weak and hyperelliptic of the Matsusaka-Ran criterion. Indeed: an indecomposable p.p.a.v. containing a smooth curve in $\xi_{min}$ is the polarized Jacobian of such curve. On the other hand, from Proposition \ref{lema-deb} it follows that if the $\Gamma_{00}$-conjecture is true, then the only i.p.p.a.v's with $\varepsilon(\theta)<2$ are the Jacobians of smooth hyperelliptic curves. 


The author expects that the results exposed in this section can be generalized to curves of arbitrary gonality. Concretely, inspired by the Jacobian case (\cite[Proposition]{L}), we propose the following
\begin{conjetura}
Let $(A,\theta)$ be a polarized abelian variety. Let $C$ be a smooth genus $h$ curve contained in $A.$ Write $c$ for the gonality of $C$ and $d = (\theta\cdot C)$ for its $\theta$-degree. Then 
$$\varepsilon(\theta)\leq \frac{cd}{c+h-1}.$$
\end{conjetura}

\section{Gau\ss-Wahl maps on curves}

In this section we specialize to the case of a smooth curve embedded in its polarized Jacobian via an Abel-Jacobi map. In this way we obtain new information about the Gau\ss-Wahl maps on smooth curves using the Seshadri constant of its Jacobian. Concretely, we prove:

\begin{theorem}
 Let $C$ be a smooth curve and $L_{1},L_{2}\in\Pic(C)$. Assume that $g(C)\geq 5$ and that $C$ is not hyperelliptic nor bielliptic. Fix a positive integer $p\geq 2$ and assume that $\deg L_{1} =\deg L_{2} \geq g(C)(p+g(C)).$ Then the Gau\ss-Wahl map $\gamma_{L_1,L_2}^{p}$ is surjective.     
\end{theorem}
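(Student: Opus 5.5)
Let $g=g(C)$ and embed $C\subset JC$ via an Abel--Jacobi map, so that $(\theta\cdot C)=g$. The plan is to run the argument of Theorem \ref{principal} in reverse, effectively. First we lift the line bundles: choose $a,b\in\Pic(JC)$ such that, writing $L=\mathcal{O}(\Theta)$, the restrictions satisfy $L_1\simeq (t_a^{\ast}L^{\otimes m})|_C$ and $L_2\simeq (t_b^{\ast}L^{\otimes m})|_C$. This works whenever $\deg L_1=\deg L_2=mg$, because restriction $\Pic^0(JC)\to\Pic^0(C)$ is an isomorphism. The non-divisible case is handled by also allowing $L^{\otimes c}$ and $L^{\otimes d}$ with $c\neq d$, as Theorem \ref{surjGW} permits. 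One then restricts as in Proposition \ref{sobreyectividad-subvariedades}: surjectivity of $\gamma^p_{cL,dM}$ on $JC$, together with the two vanishings, gives surjectivity of $\gamma^p_{L_1,L_2}$ on $C$. If degrees in between multiples of $g$ still cannot be reached, I would instead twist by a fixed effective divisor on $C$ and use a standard comparison of relation spaces. This is the least clean step.

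Second, the vanishings must be made effective, with no appeal to Serre vanishing. For $H^1(JC,I_C\otimes L^{c+d})$ we use Example \ref{jac-ejemplo}: $I_C(2\Theta)$ is IT(0), and tensoring further with ample bundles preserves IT(0) via Mukai regularity, so this vanishing holds as soon as $c+d\geq 2$. For the conormal term $H^1(C,N^\vee_{C/JC}\otimes L^{c+d}|_C)$ we use the conormal sequence $0\to N^\vee\to \Omega_{JC}|_C=\mathcal{O}_C^{g}\to\omega_C\to0$. The required vanishing is equivalent to surjectivity of the multiplication $H^0(\omega_C)\otimes H^0(T)\to H^0(\omega_C\otimes T)$ with $\deg T=(c+d)g$ large, together with $H^1(T)=0$. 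This follows from Green's $H^0$-lemma or Castelnuovo--Mumford regularity, since $\deg T\geq 2g+1$ in our range.

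Third, the Seshadri input. By Example \ref{jets-ejemplo} and \eqref{seshadri-limit} we have $\beta_p(\theta)\le (p+1)/\varepsilon(\theta)$, since the supremum bounds each term. For $C$ non-hyperelliptic, Lazarsfeld/Debarre bounds on $\varepsilon(JC)$ are needed. I would invoke the known result (Debarre, or Bauer--Szemberg) that for a Jacobian of genus $g\ge5$ which is neither hyperelliptic nor bielliptic, $\varepsilon(\theta)\geq 2$; I expect this is exactly where the hypotheses enter. Then $\beta_p(\theta)\le(p+1)/2$, with strict inequality needing care. Theorem \ref{surjGW} then requires $cd/(c+d)>(p+1)/2$, and with $c=d=m$ this means $m>p+1$. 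That would give degrees around $g(p+2)$, which is better than the assumed $g(p+g)$. This suggests the author instead uses a weaker, cruder bound, such as $\beta_p\le p+\beta_0$ or $\beta_p(\theta)\le (p+g)/2$, perhaps from subadditivity $\beta_{p}\le \beta_{p-1}+\beta_0$ combined with $\beta_1<1$ for non-hyperelliptic Jacobians. I would fall back on whichever bound yields $m\ge p+g$.

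The main obstacle is obtaining an effective, non-asymptotic upper bound on $\beta_p(\theta)$ valid for every $p\ge2$, together with its strictness. The bielliptic and $g\ge5$ hypotheses indicate that one must exclude the known cases with $\varepsilon<2$, or with $\beta_1$ too large.
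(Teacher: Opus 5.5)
\textbf{The main gap is in your third step: the inequality $\beta_p(\theta)\le (p+1)/\varepsilon(\theta)$ goes the wrong way.}

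From $\varepsilon(\ell)=\sup_p (p+1)/\beta_p(\ell)$ you only get $(p+1)/\beta_p(\ell)\le\varepsilon(\ell)$, i.e.\ $\beta_p(\ell)\ge (p+1)/\varepsilon(\ell)$ for every $p$. This lower bound is exactly what the proof of Theorem \ref{principal} uses to bound $\varepsilon$ from above. It says nothing about how small $\beta_p$ is for a fixed $p$. Already for $p=0$, a principal polarization has $\beta_0(\theta)=1$ because $\Theta$ has base points, while $1/\varepsilon(\theta)<1/2$ in your setting. Your suspicion that the resulting range $\deg L_i\approx g(p+2)$ was too good was the right alarm.

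What you are missing is an effective, non-asymptotic upper bound on $\beta_p$. The paper takes it from a multiplier-ideal construction, \cite[Proposition 6.3]{jets}: $\beta_p(\theta_C)\le (p+g)/\varepsilon(\theta_C)$. The shift from $p+1$ to $p+\dim JC$ is the price of effectivity, and it is where the bound $g(p+g)$ comes from. Combine this with Debarre's \emph{strict} inequality $\varepsilon(\theta_C)>2$ for $g\ge 5$ and $C$ neither hyperelliptic nor bielliptic (\cite[Theorem 7]{deb}). This gives $\beta_p(\theta_C)<(p+g)/2=cd/(c+d)$ with $c=d=p+g$. Theorem \ref{surjGW} then makes $\gamma^p$ surjective on $JC$ for the pair $\mathcal{O}_{JC}((p+g)\Theta_C)\otimes\eta_1$, $\mathcal{O}_{JC}((p+g)\Theta_C)\otimes\eta_2$, for all $\eta_1,\eta_2\in\Pic^0(JC)$. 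With only $\varepsilon\ge 2$ you would lose one multiple of $\Theta$.

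Your fallbacks do not replace this bound. They are not established, and even granting subadditivity, iterating it produces a bound on $\beta_p$ of slope $\beta_0(\theta)=1$ in $p$. Such a bound cannot stay below $(p+g)/2$ for large $p$.

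\textbf{There is a second, smaller gap in passing from degree $g(p+g)$ to larger degrees.} Restrictions of $\mathcal{O}(c\Theta)\otimes\eta$ have degree $cg$, so taking $c\neq d$ is incompatible with $\deg L_1=\deg L_2$. Twisting by a \emph{fixed} effective divisor only puts into the image the sections vanishing on that divisor.

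The paper's Lemma \ref{inductivogauss} instead uses that the Jacobian argument gives surjectivity for \emph{all} pairs in $\Pic^{e}(C)$, with $e=g(p+g)$. Take $M_1,M_2\in\Pic^{e+1}(C)$ and a section $s$ of $\omega_C^{\otimes p}\otimes M_1\otimes M_2$. Choose two zeros $x,y$ of $s$ itself, and write $s$ as the image of a section of $\omega_C^{\otimes p}\otimes M_1\otimes M_2(-x-y)$. Surjectivity of $\gamma^p_{M_1(-x),M_2(-y)}$ and commutativity with $H^1((M_1(-x)\boxtimes M_2(-y))(-(p+1)\Delta))\to H^1((M_1\boxtimes M_2)(-(p+1)\Delta))$ then give $s\in\operatorname{im}\gamma^p_{M_1,M_2}$. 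Induction on the degree finishes the reduction.

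Your second step is fine. The conormal vanishing is a legitimate alternative to the paper's direct check that $S^pH^0(\omega_C)\otimes H^0(L_1\otimes L_2)\to H^0(\omega_C^{\otimes p}\otimes L_1\otimes L_2)$ is surjective.
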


 \begin{proof}
First, recall that, by an standard multiplier-ideals construction (\cite[Proposition 6.3]{jets}), we have
$$\beta_{p}(\theta_{C}) \leq \frac{p+g(C)}{\varepsilon(\theta_C)}.$$
Now, if $C$ falls under the hypothesis of the Theorem, then $\varepsilon(\theta_{C})>2,$ by \cite[Theorem 7]{deb}, and thus
$$\beta_{p}(\theta_{C}) < \frac{p+g(C)}{2}.$$ 
In particular, if we write $L:=\mathcal{O}_{JC}((p+g(C))\Theta_{C}),$ then by Theorem \ref{surjGW} the Gau\ss-Wahl map $\gamma_{L\otimes\eta_{1},L\otimes\eta_{2}}^{p}$ (of line bundles on $JC$) is surjective for all $\eta_{1},\eta_{2}\in\Pic^{0}(JC).$

Next, note that by Lemma \ref{inductivogauss} below, we might assume that $\deg L_{i} = g(C)(p+g(C)).$ If we embed $C$ in $JC$ by means of an Abel-Jacobi map $\iota$, then $(\theta\cdot C) = g(C)$ and the restriction morphism $\iota^{\ast}:\Pic^{0}(JC)\rightarrow \Pic^{0}(C)$ is an isomorphism. It follows that $L_{i} = \iota^{\ast}(\mathcal{O}_{JC}((p+g(C))\Theta_{C})\otimes\eta_{i})$ for some $\eta_{i}\in\Pic^{0}(JC).$ By Proposition \ref{sobreyectividad-subvariedades}, to transfer the surjectivity, we need $$H^{1}(A,I_{C}(2(p+g)\Theta)\otimes\eta_{1}\otimes\eta_{2}) = 0.$$ Since $I_{C}(2\Theta_{C})$ is IT(0) (Example \ref{jac-ejemplo}), this vanishing is automatic as soon as $2(p+g)\geq 2,$ which holds for all $p\geq 0$ and $g\geq 1.$ 
Now, from the proof of Proposition \ref{sobreyectividad-subvariedades}, we see that the second thing that we need to consider in order to transfer the surjectivity, is the $H^{0}$ of the natural map $\left.S^{p}\Omega_{JC}\right|_{C}\otimes L_{1}\otimes L_{2}\rightarrow\omega_{C}^{\otimes p}\otimes L_{1}\otimes L_{2}.$ Now, in the Jacobian case, we have that $\Omega_{JC}$ is simply $H^{0}(C,\omega_{C})\otimes\mathcal{O}_{JC}$ and thus we are limited to consider the multiplication map $$S^{p}H^{0}(\omega_{C})\otimes H^{0}(L_{1}\otimes L_{2})\rightarrow H^{0}(\omega_{C}^{\otimes p}\otimes L_{1}\otimes L_{2}).$$ By the classical Mumford-Castelnuovo theorem (\cite[Theorem 6]{mumford2}), this map is surjective as soon as $\deg \omega_{C}^{\otimes p} \geq 2g$ and $2\deg L \geq 2g+1,$ i.e $2p(g-1)\geq 2g$ and $2g(p+g)>2g+1.$ The second condition is automatic for $p\geq 0$ and $g\geq 2$ (or $p\geq 1$ and $g\geq 1$) and the first one holds for $p\geq 2$ and $g\geq 2.$ The result then follows.

\end{proof}

\begin{lemma}
\label{inductivogauss}
Let $C$ be a smooth projective curve. Assume that for some $e\in\Z_{\geq 1}$ and for all $L_{1},L_{2}\in\Pic^{e}(C)$ the Gau\ss-Wahl map $\gamma_{L_1,L_2}^{p}$ is surjective. Then $\gamma_{M_1,M_2}^{p}$ is surjective for every $M_{1},M_{2}\in\Pic^{e+1}(C).$
\end{lemma}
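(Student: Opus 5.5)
The idea is to pass from degree $e$ to degree $e+1$ one factor at a time, by twisting with an effective divisor of degree one. The identity that drives this is that on a curve every section of a line bundle of positive degree vanishes at some point.

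\emph{Compatibility with twisting.} Fix line bundles $N_1,N_2$ on $C$ and a point $x\in C$. Let $s_x\in H^{0}(C,\mathcal{O}_C(x))$ be a section with divisor $x$. Multiplication by $\mathrm{pr}_1^{\ast}s_x$ maps $(N_1\boxtimes N_2)(-p\Delta)$ into $(N_1(x)\boxtimes N_2)(-p\Delta)$. It therefore induces an inclusion
$$\operatorname{Rel}^{p}_{C}(N_1,N_2)\hookrightarrow \operatorname{Rel}^{p}_{C}(N_1(x),N_2).$$
On the graded piece $I_\Delta^{p}/I_\Delta^{p+1}$, restricted to $\Delta\simeq C$, multiplication by $\mathrm{pr}_1^{\ast}s_x$ is multiplication by $s_x$. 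This gives
$$\gamma^{p}_{N_1(x),N_2}(s_x\cdot f)=s_x\cdot\gamma^{p}_{N_1,N_2}(f)\in H^{0}(C,N_1(x)\otimes N_2\otimes\omega_C^{\otimes p}).$$
So if $\gamma^{p}_{N_1,N_2}$ is surjective, the image of $\gamma^{p}_{N_1(x),N_2}$ contains $s_x\cdot H^{0}(N_1\otimes N_2\otimes\omega_C^{\otimes p})$. That subspace is exactly $H^{0}(K(-x))$, where $K:=N_1(x)\otimes N_2\otimes\omega_C^{\otimes p}$. The same holds with the roles of the two factors exchanged, using $\mathrm{pr}_2^{\ast}s_x$.

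\emph{Step 1: from $(e,e)$ to $(e+1,e)$.} Let $M_1\in\Pic^{e+1}(C)$ and $N_2\in\Pic^{e}(C)$, and put $K=M_1\otimes N_2\otimes\omega_C^{\otimes p}$. For every $x\in C$ the bundle $M_1(-x)$ has degree $e$, so by hypothesis $\gamma^{p}_{M_1(-x),N_2}$ is surjective. By the compatibility above, the image of $\gamma^{p}_{M_1,N_2}$ then contains $\sum_{x\in C}H^{0}(K(-x))$. Now $\deg K=2e+1+2p(g-1)>0$, using $e\geq 1$ and $g\geq 1$ when $p>0$. Hence every nonzero $\sigma\in H^{0}(K)$ vanishes at some point $x$ and lies in $H^{0}(K(-x))$. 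The sum is therefore all of $H^{0}(K)$, and $\gamma^{p}_{M_1,N_2}$ is surjective for all $M_1\in\Pic^{e+1}(C)$ and $N_2\in\Pic^{e}(C)$.

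\emph{Step 2: from $(e+1,e)$ to $(e+1,e+1)$.} Run the same argument in the second factor. Take $M_1,M_2\in\Pic^{e+1}(C)$ and twist by $\mathrm{pr}_2^{\ast}s_x$, writing $M_2=M_2(-x)\otimes\mathcal{O}(x)$. By Step 1 each $\gamma^{p}_{M_1,M_2(-x)}$ is surjective. Again $\deg(M_1\otimes M_2\otimes\omega_C^{\otimes p})>0$, so the same span argument gives surjectivity of $\gamma^{p}_{M_1,M_2}$.

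The only point needing care is the compatibility identity. One must check that multiplication by the pulled-back section commutes with the identification of $(L\boxtimes M)\otimes I_\Delta^{p}/I_\Delta^{p+1}$ with $L\otimes M\otimes\omega_C^{\otimes p}$. This is a local computation: in a coordinate $t$, with $\mathrm{pr}_1^{\ast}s_x=s(t_1)$, we have $s(t_1)(t_1-t_2)^{p}g\equiv s(t)\,(t_1-t_2)^{p}g|_{\Delta}$ modulo $I_\Delta^{p+1}$. The case $p=0$ needs no hypothesis on $g$, since then $\deg K=2e+1>0$.
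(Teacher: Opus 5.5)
Your argument is correct and is essentially the paper's proof. The paper twists both factors at once: it compares $\gamma^{p}_{M_1(-x),M_2(-y)}$ with $\gamma^{p}_{M_1,M_2}$ via multiplication by $\mathrm{pr}_1^{\ast}s_x\cdot\mathrm{pr}_2^{\ast}s_y$, uses that every section of $\omega_C^{\otimes p}\otimes M_1\otimes M_2$ vanishes on some $x+y$, and reads off the containment from the bottom square of a diagram with exact columns ($\mathrm{im}(\cdot xy)\subset\ker u=\mathrm{im}\,\gamma^{p}_{M_1,M_2}$), whereas you split this into two one-point twists through the mixed case $\Pic^{e+1}(C)\times\Pic^{e}(C)$ and get the containment directly from $\gamma(s_x f)=s_x\gamma(f)$, which is slightly more economical.
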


\begin{proof}

Let $x,y\in C$ be two points. We have the following commutative diagram with exact columns:
\begin{equation*}
\xymatrix{ \mathrm{Rel}^{p}(M_{1}(-x),M_{2}(-y)) \ar[r]\ar[d]_{\gamma} & \mathrm{Rel}^{p}(M_1,M_2) \ar[d]^{\gamma} \\ 
 H^{0}(\omega_{C}^{\otimes p}\otimes M_{1}\otimes M_{2}(-x-y)) \ar[r]_{\cdot xy}\ar[d]_{0} & H^{0}(\omega_{C}^{\otimes p}\otimes M_{1}\otimes M_{2}) \ar[d]^{u} \\ 
 H^{1}((M_{1}(-x)\boxtimes M_{2}(-y))(-(p+1)\Delta)) \ar[r] & H^{1}((M_{1}\boxtimes M_{2})(-(p+1)\Delta)) }
\end{equation*}
where the $\gamma$'s are the corresponding Gau\ss-Wahl maps and the 0 at the left bottom comes from the surjectivity of $\gamma_{M_1(-x),M_2(-y)}^{p},$ which we have by hypothesis. By commutativity, this implies that 
$$\mathrm{im}(\cdot xy)\subset\ker u = \mathrm{im} \gamma_{M_{1},M_{2}}^{p}.$$ 
Now, $\deg (\omega_{C}^{\otimes p}\otimes M_{1}\otimes M_{2}) = 2(e+1 +p(g-1)) > 2.$ In particular, for any section $s\in H^{0}(\omega_{C}^{\otimes p}\otimes M_1 \otimes M_2)$ there exist $x,y\in C$ with $s(x) = s(y) = 0$ (here, possibly $x=y,$ i.e $s$ vanishes at $x$ with multiplicity two) . The previous argument shows then that $s$ lies in the image of $\gamma_{M_1,M_2}^{p}$ and thus, since $s$ is arbitrary, we conclude that $\gamma_{M_1,M_2}^{p}$ is surjective, as we wanted to show.

\end{proof}

Here we note that this result improves \cite{BEL} as soon as
$$g(g+p)\leq (p+1)(g+1)+g-1,$$
i.e as soon as $g(g -2) \leq p.$ 

\section{Concluding remarks}



In this section we study indecomposable principally polarized abelian varieties (i.p.p.a.v) $(A,\theta)$ containing smooth curves whose $\theta$-degree is bounded by its genus. First, we remark that if we do not ask $C$ to be hyperelliptic, then it is easy to construct examples of indecomposable p.p.a.v. $(A,\theta),$ not polarized Jacobians, containing smooth curves satisfying such inequality. Concretely: 

\begin{ejemplo}
Let $(A,\theta)$ be an $a$-dimensional p.p.a.v. Let $\Theta$ be a divisor representing $\theta.$ Then for $D_{1},...,D_{a-1}\in|2\Theta|$ general, the intersection $D_1\cap\cdots\cap D_{a-1}$ is a smooth curve $C$ (\cite[Teorema 2.8]{ottaviani}). This curve satisfies that  $$\omega_{C}\simeq\left.\mathcal{O}_{A}(2(a-1)\Theta)\right|_{C}.$$ In particular: $2g(C)-2 = 2(a-1)(\theta\cdot C)$ and thus 
$$(\theta\cdot C) = \frac{1}{a-1}(g(C)-1)<g(C).$$
Note, however, that for $a\geq 3$ the curve obtained in this way is never hyperelliptic. Indeed, for $a\geq 3$ we have that $2(a-1)\Theta$ is very ample and thus $\omega_{C}$ is also very ample.
\end{ejemplo}

\begin{ejemplo}
Consider a $2:1$ \'etale cover $C\to C^{\prime}$ of smooth curves. As customary, write $g(C^{\prime}) = a+1,$ thus $g(C) = 2a+1.$ Write $(P,\theta)$ for the polarized Prym-variety associated to this cover, thus $\dim P = a$. If $C$ is not hyperelliptic then the Abel-Prym map $C\to P$ is an embedding (\cite[Corollary 12.5.6]{CAV}) and by Welter's criterion (\cite[12.2.2]{CAV}) we have $[C] = 2\xi_{min}.$ In particular, $(\theta\cdot C) = 2a < 2a+1 = g(C).$     
\end{ejemplo}

In the rest of this section we provide some remarks and general implications of the existence of a smooth curve with $(\theta\cdot C)\leq g(C).$ To do this, we use the fact that, by Example \ref{jac-ejemplo}, $(A,\theta)$ is the polarized Jacobian of $C$ if and only if the sheaf $I_{C}(\Theta)$ is GV, where $\Theta$ is a (any) divisor representing the polarization $\theta.$ 

First, note that, twisting the exact sequence $0\to I_{C}(\Theta)\to\mathcal{O}_{A}(\Theta)\to\mathcal{O}_{C}(\Theta)\to 0$ by $\alpha\in\hat{A},$ we get 
\begin{equation}
\label{Vuno}
V^{1}(I_{C}(\Theta)) = \left\{\alpha\in\hat{A} :  H^{0}(\mathcal{O}_{A}(\Theta_{\alpha}))\rightarrow H^{0}(\mathcal{O}_{C}(\Theta_{\alpha}))\hspace{0.1cm}\text{is not surjective}\right\}
\end{equation}
and
\begin{equation}
\label{Vdos}
V^{2}(I_{C}(\Theta)) = V^{1}(\mathcal{O}_{C}(\Theta)).
\end{equation}
On the other hand, by Riemann-Roch on $C,$ we have that 
\begin{equation}
\label{RR}
h^{0}(C,\mathcal{O}_{C}(\Theta_{\alpha})) = 1 + (\theta\cdot C) - g(C) + h^{1}(C,\mathcal{O}_{C}(\Theta_{\alpha})).
\end{equation}
Using this, we prove:
\begin{lemma}
\label{LemaVi}
Let $C$ be a smooth curve contained in a principally polarized abelian variety $(A,\theta).$ 
\begin{enumerate}[label = \alph*)]
\item If $(\theta\cdot C) = g(C),$ then 
$$V^{2}(I_{C}(\Theta)) = W_{C,A}^{1}(\mathcal{O}_{A}(\Theta)) :=\left\{\alpha\in\hat{A} : h^{0}(C,\mathcal{O}_{C}(\Theta_{\alpha}))\geq 2\right\}.$$
\item If $(\theta\cdot C)\leq g(C)$ and $V^{2}(I_{C}(\Theta))\neq\hat{A},$ then $V^{1}(I_{C}(\Theta))\neq\hat{A}$ and $(\theta\cdot C) = g(C).$
\item $V^{1}(I_{C}(\Theta)) = \hat{A}$ if and only if $h^{0}(C,\mathcal{O}_{C}(\Theta_{\alpha}))\geq 2$ for all $\alpha\in\hat{A}.$
\end{enumerate}
\end{lemma}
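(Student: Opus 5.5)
The three parts follow from the long exact sequence of $0\to I_{C}(\Theta)\otimes P_\alpha\to\mathcal{O}_{A}(\Theta)\otimes P_\alpha\to\mathcal{O}_{C}(\Theta_\alpha)\to 0$, i.e. from \eqref{Vuno} and \eqref{Vdos}, together with Riemann–Roch \eqref{RR}. Since $\theta$ is principal, $h^{0}(A,\mathcal{O}_{A}(\Theta_{\alpha}))=1$ and $h^{i}(A,\mathcal{O}_A(\Theta_\alpha))=0$ for $i>0$, for every $\alpha\in\hat{A}$.

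For a), I would start from \eqref{Vdos}: $\alpha\in V^{2}(I_{C}(\Theta))$ if and only if $h^{1}(C,\mathcal{O}_{C}(\Theta_{\alpha}))\neq 0$. When $(\theta\cdot C)=g(C)$, \eqref{RR} reads $h^{0}(\mathcal{O}_{C}(\Theta_\alpha))=1+h^{1}(\mathcal{O}_{C}(\Theta_\alpha))$. Hence $h^1\neq 0$ if and only if $h^0\geq 2$, which is exactly the definition of $W^{1}_{C,A}(\mathcal{O}_A(\Theta))$.

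For c), I would use \eqref{Vuno}. Since $h^{0}(\mathcal{O}_A(\Theta_\alpha))=1$, the restriction map $H^0(\mathcal{O}_A(\Theta_\alpha))\to H^0(\mathcal{O}_C(\Theta_\alpha))$ is automatically non-surjective whenever $h^{0}(\mathcal{O}_C(\Theta_\alpha))\geq 2$. This gives the implication "$\Leftarrow$" pointwise. For "$\Rightarrow$" the issue is the case $h^0(\mathcal{O}_C(\Theta_\alpha))\le 1$. The map fails to be surjective there only if $h^0(\mathcal{O}_C(\Theta_\alpha))=1$ and the theta section vanishes identically on $C$, i.e. $C\subset\Theta_\alpha$ (here $\Theta_\alpha$ denotes the translate corresponding to $\alpha$). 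The set of $\alpha$ with $C\subset \Theta_\alpha$ is a proper closed subset, since translates of $\Theta$ containing $C$ cannot cover all translations: otherwise $C+C'\subset\Theta$-translates would force every point into $\Theta$. So if $V^{1}=\hat{A}$, then on a dense open set the map fails because $h^0\geq 2$. By upper semicontinuity of $h^{0}$, this gives $h^0\geq 2$ everywhere. This reduction, excluding the "$C\subset\Theta_\alpha$ for all $\alpha$" possibility, is the one genuinely geometric point, and I expect it to be the main obstacle. I would argue it via: $\bigcap_{x\in -C}(\Theta+x)$ is proper, so a general translate does not contain $C$.

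For b), assume $(\theta\cdot C)\le g(C)$ and $V^2\neq\hat{A}$. Take $\alpha$ general, so that $h^{1}(\mathcal{O}_{C}(\Theta_\alpha))=0$. Then \eqref{RR} gives $h^{0}=1+(\theta\cdot C)-g(C)\le 1$. Since $(\theta\cdot C)>0$ (by ampleness) and degree-positive line bundles on $C$ with $h^1=0$ have $h^0\geq 1$, we get $h^0=1$ and hence $(\theta\cdot C)=g(C)$. Moreover, for this general $\alpha$ we have $h^0(\mathcal{O}_C(\Theta_\alpha))<2$, so c) yields $V^{1}(I_{C}(\Theta))\neq\hat{A}$.
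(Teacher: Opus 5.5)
Your parts a) and c) are correct and follow the paper's route: the twisted restriction sequence giving \eqref{Vuno} and \eqref{Vdos}, Riemann--Roch \eqref{RR}, and the fact that a general $\Theta_\alpha$ does not contain $C$. Your observation that $\{\alpha : C\subset\Theta_\alpha\}$ lies in a single translate of $-\Theta$, hence is a proper closed subset, is a more elementary substitute for the paper's appeal to Kleiman. Your semicontinuity argument spells out what the paper leaves as ``follows similarly'' for c). Deriving the $V^1$ part of b) from c) is also fine, since it only needs $h^0(\mathcal{O}_C(\Theta_\alpha))\le 1$ for a single $\alpha$.

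There is, however, a false step in b). You claim that a line bundle of positive degree with $h^1=0$ has $h^0\ge 1$. This fails: a general line bundle of degree $g-1\ge 1$ on a curve of genus $g\ge 2$ has $h^0=h^1=0$. From $h^1(\mathcal{O}_C(\Theta_\alpha))=0$ and \eqref{RR} alone you only get $0\le h^0=1+(\theta\cdot C)-g(C)\le 1$. That gives $(\theta\cdot C)\in\{g(C)-1,\,g(C)\}$, and the case $g(C)-1$ is not excluded.

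The missing input is effectivity, which is exactly what the paper uses. For $\alpha$ outside the proper closed set $\{\alpha : C\subset\Theta_\alpha\}$, which you already control in c), the section of $\mathcal{O}_A(\Theta_\alpha)$ restricts to a nonzero section of $\mathcal{O}_C(\Theta_\alpha)$, so $h^0\ge 1$. Choosing $\alpha$ in the intersection of this dense open set with $\hat{A}\setminus V^2(I_C(\Theta))$ gives $h^0=1$ and hence $(\theta\cdot C)=g(C)$, which repairs the argument.
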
 

\begin{proof}

Part a) directly follows from \eqref{Vdos} and \eqref{RR}. Now, under the hypothesis on b), there exists $\alpha\in\hat{A}$ such that $h^{1}(\mathcal{O}_{C}(\Theta_{\alpha})) = 0$ while, on the other hand, in any case we have that $\Theta_{\alpha}$ is effective and thus $h^{0}(\mathcal{O}_{C}(\Theta_{\alpha}))\geq 1.$ In particular, it follows from \eqref{RR} that $(\theta\cdot C)= g(C)$ and $h^{0}(\mathcal{O}_{C}(\Theta_{\alpha})) = 1$ for general $\alpha\in\hat{A}.$ Now, for such general $\alpha,$ we have that the map $H^{0}(\mathcal{O}_{A}(\Theta_{\alpha}))\rightarrow H^{0}(\mathcal{O}_{C}(\Theta_{\alpha}))$ fails to be surjective if and only if it is zero, which happens if and only if $C\subset\Theta_{\alpha}.$ Since for general $\alpha$ we have that $C$ intersects $\Theta_{\alpha}$ transversely (for instance, by Kleiman's transversality Theorem, \cite[4. Corollary]{kleiman}), we conclude that $V^{1}(I_{C}(\Theta))\neq\hat{A}$ by \eqref{Vuno}. Part c) follows similarly.

\end{proof}

Now, recall that the generalized Brill-Noether locus
$$W_{C,A}^{1}(\mathcal{O}_{C}(\theta_{A})):=\{\alpha\in\hat{A} : h^{0}(C,\mathcal{O}_{C}(\Theta_{A})\otimes\iota^{\ast}P_{\alpha})\geq 2\}$$
mentioned in the introduction admits a description as the degeneracy locus of certain morphism of vector bundles. This is shown, for example, in \cite[5.1]{pareschi-generation}, but we recall the construction here just for the sake of self-containedness. Concretely, for a normalized Poincar\'e bundle $\mathcal{P}\in\Pic(A\times\hat{A})$ we consider the line bundle $\mathcal{P}_{C,A} = (\iota\times\mathrm{id})^{\ast}\mathcal{P}\in\Pic(C\times\hat{A})$ and define the functor $\Phi_{\mathcal{P}_{C,A}}:D^{b}(C)\rightarrow D^{b}(A)$ by 
$$\Phi_{\mathcal{P}_{C,A}}(\mathcal{F}) = \mathrm{Rpr}_{A\ast}\left(\mathrm{pr}_{C}^{\ast}(\mathcal{F})\otimes\mathcal{P}_{C,A}\right),$$
where $\mathrm{pr}_{C},\mathrm{pr}_{A}$ are the product projections from $C\times A.$ Now, let $E$ be a divisor in $C$ with $\operatorname{deg} E \gg 0.$ Then 
$$H^{1}(C,\mathcal{O}_{C}(\Theta_{A}+E)\otimes\iota^{\ast}P_{\alpha}) = 0 \hspace{0.4cm}\forall\alpha\in\hat{A},$$
and thus $\Phi_{\mathcal{P}_{C,A}}(\mathcal{O}_{C}(\Theta_{A}+E))$ is a locally free sheaf (concentrated in degree zero) by Grauert's theorem. Similarly, since $\dim E = 0,$ the same is true for $\Phi_{\mathcal{P}_{C,A}}(\left.\mathcal{O}_{C}(\Theta_{A}+E)\right|_{E}).$ In this context, one considers the restriction morphism $\mathrm{res}_{E}:\mathcal{O}_{C}(\Theta_{A}+E)\rightarrow\left.\mathcal{O}_{C}(\Theta_{A}+E)\right|_{E}$ and apply the functor $\Phi_{\mathcal{P}_{C,A}},$ to get a morphism of vector bundles. Note that the source of this morphism has rank $\chi(\mathcal{O}_{C}(\Theta_{A}+E))$ while the target has rank $\operatorname{deg} E.$ Moreover, by the usual base change in cohomology we have that
$$H^{0}(C,\mathcal{O}_{C}(\Theta_{A})\otimes\iota^{\ast}P_{\alpha}) = \ker(\Phi_{\mathcal{P}_{C,A}}(\mathrm{res}_{E})\otimes k(\alpha)).$$
In particular, 
$$W_{C,A}^{1}(\mathcal{O}_{C}(\Theta_{A})) = \left\{\alpha\in\hat{A} : \operatorname{rk}\left(\Phi_{\mathcal{P}_{C,A}}(\mathrm{res}_{E})\otimes k(\alpha)\right) \leq\chi(\mathcal{O}_{C}(\Theta_{A}+E)) - 2 \right\},$$
the $(\chi(\mathcal{O}_{C}(\Theta_{A}+E)) - 2)$-degeneracy locus of $\Phi_{\mathcal{P}_{C,A}}(\mathrm{res}_{E}).$ The expected codimension (in $\hat{A}$) of this locus is $2(1-(\theta_{A}\cdot C)+g(C)).$ In particular, if $(\theta_{A}\cdot C) = g(C)$, we have that 
$$\mathrm{exp.codim}_{\hat{A}} W_{C,A}^{1}(\mathcal{O}_{C}(\Theta_{A})) = 2.$$

In this context, we can prove the following: 

\begin{prop}
\label{fibras-suma}
Let $(A,\theta_{A})$ be an indecomposable principally polarized abelian variety. Assume that there exists a smooth curve $C$ generating $A$ and satisfying $(\theta\cdot C) = g(C).$ Write $\iota:C\hookrightarrow A$ for the inclusion and $m:\iota^{\ast}\Pic^{0}A\times W_{g(C)}^{1}(C)\rightarrow W_{g(C)}(C)$ for the sum map. The following are equivalent:
\begin{enumerate}[label = \roman*)]
\item $(A,\theta_{A})$ is the polarized Jacobian of $C$
\item $W_{C,A}^{1}(\mathcal{O}_{A}(\Theta_{A}))$ has the expected codimension (namely, two) for a (any) divisor $\Theta_{A}$ representing $\theta_{A}$.
\item The intersection $\iota^{\ast}\Pic^{0}(A)\cap(W_{g(C)}^{1}(C)\otimes\mathcal{O}_{C}(-\Theta_{A}))$ (taken inside $JC$) has the expected dimension (namely $\dim A -2$)
\item The closed set 
$$Z_{\dim A -1}(m):=\{L\in W_{g(C)}(C) : \dim m^{-1}(L)\geq \dim A -1\}$$
is empty. 
\end{enumerate}
\end{prop}

\begin{proof}

If $(A,\theta)$ is not the polarized Jacobian of $C,$ then $I_{C}(\Theta)$ is not GV. Under the hypothesis $(\theta\cdot C)= g(C)$ we have, by the above lemma, that either:
\begin{enumerate}
\item $V^{1}(I_{C}(\Theta)) = V^{2}(I_{C}(\Theta)) = \hat{A},$ or
\item $\dim V^{2}(I_{C}(\Theta)) = \dim A -1$ and $V^{1}(I_{C}(\Theta))\neq\hat{A}$ 
\end{enumerate}
that is, the fact that $I_{C}(\Theta)$ is not GV is caused solely by $V^{2}(I_{C}(\Theta)).$ In any case, by Lemma \ref{LemaVi}a) we have 
$$\iota^{\ast}V^{2}(I_{C}(\Theta)) = \iota^{\ast}\Pic^{0}A \cap \left(W_{h}^{1}(C)\otimes\mathcal{O}_{C}(-\Theta)\right),$$
where $h=g(C)$ and the intersection is taken inside $\Pic^{0}C = JC.$ Now, for $\eta\in JC,$ the intersection
\begin{equation}
\label{int-eta}
\left(\eta\otimes\iota^{\ast}\Pic^{0}A\right)\cap\left(W_{h}^{1}(C)\otimes\mathcal{O}_{C}(-\Theta)\right)
\end{equation} 
is exactly the fiber over $\eta$ of the difference map 
$$\delta: \left(W_{h}^{1}(C)\otimes\mathcal{O}_{C}(-\Theta)\right)\times\iota^{\ast}\Pic^{0}A \longrightarrow JC$$
which, since $\iota^{\ast}\Pic^{0}A$ is an abelian variety, is also isomorphic to the fiber over $\mathcal{O}_{C}(\Theta)\otimes\eta$ of the sum map 
$$m:\iota^{\ast}\Pic^{0}A \times W_{h}^{1}(C)\longrightarrow W_{h}(C).$$
In this context, the fact that $\dim V^{2}(I_{C}(\Theta)) \geq \dim A -1,$ means exactly that $\mathcal{O}_{C}(\Theta)$ belongs to the set $Z_{\dim A -1}(m).$ Indeed: $m^{-1}(\mathcal{O}_{C}(\Theta))$ contains all the elements of the form $(\iota^{\ast}P_{\alpha}^{\vee},\mathcal{O}_{C}(\Theta_{\alpha}))$ for $\alpha\in V^{2}(I_{C}(\Theta))$ and the map $\iota^{\ast}:\Pic^{0}A\to\Pic^{0}C$ is finite onto its image. The equivalence with (ii) follows from the fact that the dimension of \eqref{int-eta} is the same as the one of $W_{C,A}^{1}(\mathcal{O}_{A}(\Theta_{A})),$ simply because the former is the counter-image of the later via the map $\iota^{\ast},$ which is finite onto its image)

\end{proof}  

We now discuss the statement and proof of the above result: 
\begin{enumerate}[align=left, leftmargin=0pt, labelindent=\parindent,
listparindent=\parindent, labelwidth=0pt, itemindent=!]
\item Note that in Case 1 of the above proof we have that there is a translate of $\iota^{\ast}\Pic^{0}A$ contained in $W_{g(C)}^{1}(C).$ By \cite[Proposition 3.3]{debarrewd} this can only occur if $g(C)$ is at least $2\dim A +2.$  Moreover, by \cite[Corollary 3.9]{CLP}, equality occurs if and only there exists a $2:1$ morphism $f:C\to C^{\prime}$ to a smooth curve $C^{\prime}$ of genus $a=\dim A$ with $g(C)>2a+1$ and $\iota^{\ast}\Pic^{0}A = f^{\ast}JC^{\prime}.$ Now, if $C$ is assumed hyperelliptic, then by Severi-Castelnuovo inequality, we have 
$$2a+1<g(C) \leq 2\cdot 0 + 2\cdot a + (2-1)\cdot(2-1) = 2a+1,$$
which is absurd. Thus in this situation we have $g(C)\geq 2\dim A + 3.$  
\item The set $Z_{\dim A -1}(m)\subset W_{g(C)}(C)$ is itself invariant under translations by elements in $\iota^{\ast}\Pic^{0}A.$ Indeed: if $L\in Z_{\dim A-1}(m)$ and $\alpha\in\hat{A}$ then $m^{-1}(L\otimes\iota^{\ast}P_{\alpha})$ contains all the elements $(\iota^{\ast}P_{\beta+\alpha},M)$ for $(\iota^{\ast}P_{\beta},M)\in m^{-1}(L).$ A priori we do not know whether this set is contained in some $W_{g(C)}^{r}(C)$ for $r>0.$
\item By Kleiman's transversality Theorem (\cite[4. Corollary]{kleiman}), we know that if $\eta\in\Pic^{0}C$ is a  general element, then the intersection  \eqref{int-eta} has the expected dimension. In this setting, the condition $\dim V^{2}(I_{C}(\Theta))\geq \dim A -1$ means exactly that the origin $\mathcal{O}_{C}$ belongs to the closed set of $\Pic^{0}C$ for which Kleiman's theorem fails.  
\end{enumerate}

We conclude this article with a couple of comments about the case $d:=(\theta\cdot C)< g(C).$ 

In this situation, by Proposition 5.3(i) in \cite{degree-curves} (which directly follows from \cite[Proposition 3.3]{debarrewd}) we necessarily have that $d\geq 2\dim A.$ We can give a slightly more detailed description of what happens in this case. Concretely, by Lemma \ref{LemaVi}b) above, the hypothesis $d<g(C)$ ensures that $V^{2}(I_{C}(\Theta)) = \hat{A}.$ If $V^{1}(I_{C}(\Theta))$ is also equal to $\hat{A},$ then by Lemma \ref{LemaVi}c) we have that $W_{d}^{1}(C)$ contains a translate of $\iota^{\ast}\Pic^{0}A$ and thus $g(C)>d\geq 2\dim A +2,$ once again by \cite[Proposition 3.3]{debarrewd}. However, unlike the case $(\theta\cdot C) = g(C),$ it could happen that $V^{1}(I_{C}(\Theta))\neq\hat{A}.$ In this situation, for general $\alpha$ we have $h^{0}(\mathcal{O}_{C}(\Theta_{\alpha})) = 1$ and $g(C) = d + h^{1}(\mathcal{O}_{C}(\Theta_{\alpha})).$ In any case, a translate of $\iota^{\ast}\Pic^{0}A$ is contained in $W_{d}(C)$ and thus  $d\geq 2\dim A$. By Corollary 3.9 in \cite{CLP}, equality occurs if and only if $\iota^{\ast}\Pic^{0}A = f^{\ast}JC^{\prime}$ for a degree two morphism $f:C\to C^{\prime}$ to a curve $C^{\prime}$ of genus $a<(g(C)-1)/2.$ By Severi-Castelnuovo inequality, this can not happen if $C$ is assumed to be hyperelliptic. Thus:
\begin{prop}
Let $C$ be a smooth hyperelliptic curve contained in a principally polarized abelian variety $(A,\theta).$ Suppose that $(\theta\cdot C) < g(C).$ Then $(\theta\cdot C)\geq 2\dim A +1.$
\end{prop}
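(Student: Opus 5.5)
The plan is to follow the discussion that precedes the statement and pin down each inequality. Write $d=(\theta\cdot C)$, $h=g(C)$ and $a=\dim A$, and let $\iota:C\hookrightarrow A$ be the inclusion. We may assume $C$ generates $A$; otherwise we pass to the abelian subvariety it generates, and the argument below should adapt. Then $\iota^{\ast}:\Pic^{0}A\to\Pic^{0}C$ is finite onto its image, which is an abelian subvariety $T:=\iota^{\ast}\Pic^{0}A$ of dimension $a$.

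\textbf{Step 1.} Show that some translate of $T$ lies in $W_{d}(C)$. For every $\alpha\in\hat{A}$ the divisor $\Theta_{\alpha}$ is effective and does not contain $C$ for general $\alpha$. Hence $\mathcal{O}_{C}(\Theta)\otimes\iota^{\ast}P_{\alpha}$ is effective of degree $d$ for general $\alpha$. By closedness this holds for all $\alpha$, so $\mathcal{O}_{C}(\Theta)\otimes T\subset W_{d}(C)$.

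\textbf{Step 2.} Apply \cite[Proposition 3.3]{debarrewd}, which says that if $W_{d}(C)$ contains a translate of an $a$-dimensional abelian subvariety with $d<h$, then $d\geq 2a$. This is the bound quoted as \cite[Proposition 5.3(i)]{degree-curves}, and the hypothesis $d<g(C)$ is exactly what is needed to use it.

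\textbf{Step 3.} Rule out $d=2a$. By \cite[Corollary 3.9]{CLP}, equality forces a degree two morphism $f:C\to C'$ onto a smooth curve of genus $a$ with $T=f^{\ast}JC'$ and $a<(h-1)/2$, so $h>2a+1$. Now $C$ is hyperelliptic, and the involution defining $f$ cannot coincide with the hyperelliptic involution, because $C'$ has genus $a\geq 1$ (here $a\geq 1$ since $C$ is an irrational curve in $A$). So the maps $C\to\Ps^{1}$ and $C\to C'$ do not factor through a common nontrivial morphism. The Castelnuovo--Severi inequality \cite[Theorem 3.5.(iii)]{Ac} then gives
$$h\leq 2\cdot 0+2a+(2-1)(2-1)=2a+1,$$
which contradicts $h>2a+1$. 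Hence $d\neq 2a$, and combined with Step 2 we get $d\geq 2a+1$.

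The main obstacle is checking that the hypotheses of the cited results are met in our setting. \cite[Proposition 3.3]{debarrewd} and \cite[Corollary 3.9]{CLP} both need $d$ to lie in the appropriate range relative to $h$, which is why $d<h$ matters. \cite[Corollary 3.9]{CLP} may also require $C$ to generate $A$, or $T$ to be exactly an abelian subvariety of dimension $a$ rather than a quotient. I would first verify these conditions, since the Castelnuovo--Severi step itself is routine.
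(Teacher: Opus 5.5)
Your argument follows the same route as the paper's discussion just before the statement: a translate of $T=\iota^{\ast}\Pic^{0}A$ lies in $W_{d}(C)$, Debarre--Fahlaoui gives $d\geq 2a$, and the equality case is handled with \cite[Corollary 3.9]{CLP} plus Castelnuovo--Severi. Steps 1 and 2 are fine.

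The gap is in Step 3, where you infer that equality forces $a<(h-1)/2$, i.e.\ $h>2a+1$. From $d=2a<h$ you only get $h\geq 2a+1$, and the boundary case $h=2a+1$ really occurs. In the Prym example of Section 5, $C$ is non-hyperelliptic, generates $P$, and has $d=2a$ and $g(C)=2a+1$. Your Step 3 uses no hyperellipticity before invoking \cite{CLP}, so it would give $h>2a+1$ for that curve too, which is false. So $a<(h-1)/2$ can only be a hypothesis of the double-cover characterization (it amounts to $d\leq h-2$), and the case $d=2a=h-1$ is left open.

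In that case your Castelnuovo--Severi bound $h\leq 2a+1$ is an equality, so there is no contradiction. None can come from Brill--Noether data alone either. Take the hyperelliptic curve $y^{2}=p(x^{2})$ with $\deg p=2a+2$: it has genus $2a+1$ and a double cover $f:C\to C'$ onto the genus-$a$ curve $C':y^{2}=p(u)$. Hence $f^{\ast}\Pic^{a}(C')$ is a translate of an $a$-dimensional abelian subvariety contained in $W_{2a}(C)=W_{h-1}(C)$.

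What is missing is an input that uses the fact that $\iota$ is an embedding. For example, suppose $T=f^{\ast}JC'$ for a double cover $f$; it is ramified by Riemann--Hurwitz, since $h\geq 2a+1$. Write $\iota^{\ast}=f^{\ast}\circ\psi$ and dualize. The homomorphism $JC\to A$ induced by $\iota$ then factors through $\mathrm{Nm}_{f}:JC\to JC'$, so $\iota$ factors through $f$ and is not injective. This rules out the double-cover case for every embedded curve, hyperelliptic or not, so Castelnuovo--Severi is not even needed when $d\leq h-2$.

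For $d=h-1$ you would still need to know which $a$-dimensional abelian subvarieties can have a translate in $W_{h-1}(C)$. Pryms of \'etale double covers, as in the example above, are one possibility. You would then have to rule each of them out for a hyperelliptic embedded curve. The paper's sketch reads \cite[Corollary 3.9]{CLP} the same way you do and does not treat this boundary case either.

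Separately, your reduction to the case where $C$ generates $A$ does not work as stated. Passing to the abelian subvariety $B$ generated by $C$ only gives $d\geq 2\dim B+1$, so the generation hypothesis has to be assumed, as the paper tacitly does.
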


\bibliographystyle{alpha}
\bibliography{referencias-hyp}

@book {PositivityI,
    AUTHOR = {Lazarsfeld, Robert},
     TITLE = {Positivity in algebraic geometry. {I}},
    SERIES = {Ergebnisse der Mathematik und ihrer Grenzgebiete. 3. Folge. A
              Series of Modern Surveys in Mathematics [Results in
              Mathematics and Related Areas. 3rd Series. A Series of Modern
              Surveys in Mathematics]},
    VOLUME = {48},
      NOTE = {Classical setting: line bundles and linear series},
 PUBLISHER = {Springer-Verlag, Berlin},
      YEAR = {2004},
     PAGES = {xviii+387},
      ISBN = {3-540-22533-1},
   MRCLASS = {14-02 (14C20)},
  MRNUMBER = {2095471},
MRREVIEWER = {Mihnea Popa},
       DOI = {10.1007/978-3-642-18808-4},
       URL = {https://doi.org/10.1007/978-3-642-18808-4},
}

@incollection {BEL,
    AUTHOR = {Bertram, Aaron and Ein, Lawrence and Lazarsfeld, Robert},
     TITLE = {Surjectivity of {G}aussian maps for line bundles of large
              degree on curves},
 BOOKTITLE = {Algebraic geometry ({C}hicago, {IL}, 1989)},
    SERIES = {Lecture Notes in Math.},
    VOLUME = {1479},
     PAGES = {15--25},
 PUBLISHER = {Springer, Berlin},
      YEAR = {1991},
      ISBN = {3-540-54456-9},
   MRCLASS = {14H60},
  MRNUMBER = {1181203},
MRREVIEWER = {Enrique\ Arrondo},
       DOI = {10.1007/BFb0086260},
       URL = {https://doi.org/10.1007/BFb0086260},
}

@article{jets,
    author = {Alvarado, Nelson},
    title = {Jets Separation Thresholds, Seshadri Constants, and Higher Gauss–Wahl Maps on Abelian Varieties},
    journal = {International Mathematics Research Notices},
    volume = {2025},
    number = {9},
    pages = {rnaf107},
    year = {2025},
    month = {05},
    issn = {1073-7928},
    doi = {10.1093/imrn/rnaf107},
    url = {https://doi.org/10.1093/imrn/rnaf107},
    eprint = {https://academic.oup.com/imrn/article-pdf/2025/9/rnaf107/63018243/rnaf107.pdf},
}

@book {CAV,
    AUTHOR = {Birkenhake, Christina and Lange, Herbert},
     TITLE = {Complex abelian varieties},
    SERIES = {Grundlehren der mathematischen Wissenschaften [Fundamental
              Principles of Mathematical Sciences]},
    VOLUME = {302},
   EDITION = {Second},
 PUBLISHER = {Springer-Verlag, Berlin},
      YEAR = {2004},
     PAGES = {xii+635},
      ISBN = {3-540-20488-1},
   MRCLASS = {14-02 (14H37 14Kxx 32G20)},
  MRNUMBER = {2062673},
MRREVIEWER = {Fumio Hazama},
       DOI = {10.1007/978-3-662-06307-1},
       URL = {https://doi.org/10.1007/978-3-662-06307-1},
}

@incollection {deb,
    AUTHOR = {Debarre, Olivier},
     TITLE = {Seshadri constants of abelian varieties},
 BOOKTITLE = {The {F}ano {C}onference},
     PAGES = {379--394},
 PUBLISHER = {Univ. Torino, Turin},
      YEAR = {2004},
   MRCLASS = {14K05 (14C20)},
  MRNUMBER = {2112583},
MRREVIEWER = {Federica Galluzzi},
}

@article {L,
    AUTHOR = {Lazarsfeld, Robert},
     TITLE = {Lengths of periods and {S}eshadri constants of abelian
              varieties},
   JOURNAL = {Math. Res. Lett.},
  FJOURNAL = {Mathematical Research Letters},
    VOLUME = {3},
      YEAR = {1996},
    NUMBER = {4},
     PAGES = {439--447},
      ISSN = {1073-2780},
   MRCLASS = {14K05 (14H40 14H42)},
  MRNUMBER = {1406008},
MRREVIEWER = {H. Lange},
       DOI = {10.4310/MRL.1996.v3.n4.a1},
       URL = {https://doi.org/10.4310/MRL.1996.v3.n4.a1},
}

@article {bauer-supalg,
    AUTHOR = {Bauer, Thomas},
     TITLE = {Seshadri constants on algebraic surfaces},
   JOURNAL = {Math. Ann.},
  FJOURNAL = {Mathematische Annalen},
    VOLUME = {313},
      YEAR = {1999},
    NUMBER = {3},
     PAGES = {547--583},
      ISSN = {0025-5831,1432-1807},
   MRCLASS = {14C20 (14C21 14E25 14K05)},
  MRNUMBER = {1678549},
MRREVIEWER = {Tomasz\ Szemberg},
       DOI = {10.1007/s002080050272},
       URL = {https://doi.org/10.1007/s002080050272},
}

@article {borowka-Ortega,
    AUTHOR = {Bor\'{o}wka, Pawe\l and Ortega, Angela},
     TITLE = {Hyperelliptic curves on {$(1,4)$}-polarised abelian surfaces},
   JOURNAL = {Math. Z.},
  FJOURNAL = {Mathematische Zeitschrift},
    VOLUME = {292},
      YEAR = {2019},
    NUMBER = {1-2},
     PAGES = {193--209},
      ISSN = {0025-5874,1432-1823},
   MRCLASS = {14H40 (14H30)},
  MRNUMBER = {3968899},
MRREVIEWER = {Nathan\ Grieve},
       DOI = {10.1007/s00209-018-2174-2},
       URL = {https://doi.org/10.1007/s00209-018-2174-2},
}

@article {Pirola,
    AUTHOR = {Pirola, Gian Pietro},
     TITLE = {Curves on generic {K}ummer varieties},
   JOURNAL = {Duke Math. J.},
  FJOURNAL = {Duke Mathematical Journal},
    VOLUME = {59},
      YEAR = {1989},
    NUMBER = {3},
     PAGES = {701--708},
      ISSN = {0012-7094,1547-7398},
   MRCLASS = {14H40 (14H10 14K99)},
  MRNUMBER = {1046744},
MRREVIEWER = {Gang\ Xiao},
       DOI = {10.1215/S0012-7094-89-05931-0},
       URL = {https://doi.org/10.1215/S0012-7094-89-05931-0},
}

@article {Nakamaye,
    AUTHOR = {Nakamaye, Michael},
     TITLE = {Seshadri constants on abelian varieties},
   JOURNAL = {Amer. J. Math.},
  FJOURNAL = {American Journal of Mathematics},
    VOLUME = {118},
      YEAR = {1996},
    NUMBER = {3},
     PAGES = {621--635},
      ISSN = {0002-9327,1080-6377},
   MRCLASS = {14C20 (14K99)},
  MRNUMBER = {1393263},
MRREVIEWER = {Wolfgang\ Michael\ Ruppert},
       URL =
              {http://muse.jhu.edu/journals/american_journal_of_mathematics/v118/118.3nakamaye.pdf},
}

@article {degree-curves,
    AUTHOR = {Debarre, Olivier},
     TITLE = {Degrees of curves in abelian varieties},
   JOURNAL = {Bull. Soc. Math. France},
  FJOURNAL = {Bulletin de la Soci\'{e}t\'{e} Math\'{e}matique de France},
    VOLUME = {122},
      YEAR = {1994},
    NUMBER = {3},
     PAGES = {343--361},
      ISSN = {0037-9484,2102-622X},
   MRCLASS = {14K05 (14H40)},
  MRNUMBER = {1294460},
MRREVIEWER = {Juan-Carlos\ Naranjo},
       URL = {http://www.numdam.org/item?id=BSMF_1994__122_3_343_0},
}

@article {Ran,
    AUTHOR = {Ran, Ziv},
     TITLE = {On subvarieties of abelian varieties},
   JOURNAL = {Invent. Math.},
  FJOURNAL = {Inventiones Mathematicae},
    VOLUME = {62},
      YEAR = {1981},
    NUMBER = {3},
     PAGES = {459--479},
      ISSN = {0020-9910,1432-1297},
   MRCLASS = {14K05 (14H40 14K10)},
  MRNUMBER = {604839},
MRREVIEWER = {T.\ Matsusaka},
       DOI = {10.1007/BF01394255},
       URL = {https://doi.org/10.1007/BF01394255},
}

@article {Mats,
    AUTHOR = {Matsusaka, Teruhisa},
     TITLE = {On a characterization of a {J}acobian variety},
   JOURNAL = {Mem. Coll. Sci. Univ. Kyoto Ser. A. Math.},
  FJOURNAL = {Memoirs of the College of Science. University of Kyoto. Series
              A. Mathematics},
    VOLUME = {32},
      YEAR = {1959},
     PAGES = {1--19},
      ISSN = {0368-8887},
   MRCLASS = {14.00},
  MRNUMBER = {108497},
MRREVIEWER = {Andr\'{e} Weil},
       DOI = {10.1215/kjm/1250776695},
       URL = {https://doi.org/10.1215/kjm/1250776695},
}

@article {vgvdg,
    AUTHOR = {van Geemen, Bert and van der Geer, Gerard},
     TITLE = {Kummer varieties and the moduli spaces of abelian varieties},
   JOURNAL = {Amer. J. Math.},
  FJOURNAL = {American Journal of Mathematics},
    VOLUME = {108},
      YEAR = {1986},
    NUMBER = {3},
     PAGES = {615--641},
      ISSN = {0002-9327},
   MRCLASS = {14K25 (11F46 14H40 14K10 32G20)},
  MRNUMBER = {844633},
MRREVIEWER = {Emma Previato},
       DOI = {10.2307/2374657},
       URL = {https://doi.org/10.2307/2374657},
}

@incollection {grushevski-gamma00,
    AUTHOR = {Grushevsky, Samuel},
     TITLE = {A special case of the {$\Gamma_{00}$} conjecture},
 BOOKTITLE = {Liaison, {S}chottky problem and invariant theory},
    SERIES = {Progr. Math.},
    VOLUME = {280},
     PAGES = {223--231},
 PUBLISHER = {Birkh\"{a}user Verlag, Basel},
      YEAR = {2010},
      ISBN = {978-3-0346-0200-6},
   MRCLASS = {14H42 (14H40)},
  MRNUMBER = {2664656},
       DOI = {10.1007/978-3-0346-0201-3\{_}12}

@incollection {BD-theta,
    AUTHOR = {Beauville, Arnaud and Debarre, Olivier},
     TITLE = {Sur les fonctions th\^{e}ta du second ordre},
 BOOKTITLE = {Arithmetic of complex manifolds ({E}rlangen, 1988)},
    SERIES = {Lecture Notes in Math.},
    VOLUME = {1399},
     PAGES = {27--39},
 PUBLISHER = {Springer, Berlin},
      YEAR = {1989},
   MRCLASS = {14H42 (14H40 14K25)},
  MRNUMBER = {1034254},
MRREVIEWER = {Gerald E. Welters},
       DOI = {10.1007/BFb0095966},
       URL = {https://doi.org/10.1007/BFb0095966},
}

@article {regAVI,
    AUTHOR = {Pareschi, Giuseppe and Popa, Mihnea},
     TITLE = {Regularity on abelian varieties. {I}},
   JOURNAL = {J. Amer. Math. Soc.},
  FJOURNAL = {Journal of the American Mathematical Society},
    VOLUME = {16},
      YEAR = {2003},
    NUMBER = {2},
     PAGES = {285--302},
      ISSN = {0894-0347,1088-6834},
   MRCLASS = {14K12 (14K05)},
  MRNUMBER = {1949161},
MRREVIEWER = {H.\ Lange},
       DOI = {10.1090/S0894-0347-02-00414-9},
       URL = {https://doi.org/10.1090/S0894-0347-02-00414-9},
}

@article {GV-min-GV,
    AUTHOR = {Pareschi, Giuseppe and Popa, Mihnea},
     TITLE = {Generic vanishing and minimal cohomology classes on abelian
              varieties},
   JOURNAL = {Math. Ann.},
  FJOURNAL = {Mathematische Annalen},
    VOLUME = {340},
      YEAR = {2008},
    NUMBER = {1},
     PAGES = {209--222},
      ISSN = {0025-5831},
   MRCLASS = {14K12 (14F17)},
  MRNUMBER = {2349774},
MRREVIEWER = {Jungkai Alfred Chen},
       DOI = {10.1007/s00208-007-0146-7},
       URL = {https://doi.org/10.1007/s00208-007-0146-7},
}

@article {cohrank,
    AUTHOR = {Jiang, Zhi and Pareschi, Giuseppe},
     TITLE = {Cohomological rank functions on abelian varieties},
   JOURNAL = {Ann. Sci. \'{E}c. Norm. Sup\'{e}r. (4)},
  FJOURNAL = {Annales Scientifiques de l'\'{E}cole Normale Sup\'{e}rieure. Quatri\`eme
              S\'{e}rie},
    VOLUME = {53},
      YEAR = {2020},
    NUMBER = {4},
     PAGES = {815--846},
      ISSN = {0012-9593},
   MRCLASS = {14F08 (14K05)},
  MRNUMBER = {4157109},
MRREVIEWER = {Jungkai Alfred Chen},
       DOI = {10.24033/asens.2435},
       URL = {https://doi.org/10.24033/asens.2435},
}

@article {JSZ,
    AUTHOR = {Jow, Shin-Yao and Sauvaget, Adrien and Zelaci, Hacen},
     TITLE = {On the principally polarized abelian varieties with
              {$M$}-minimal curves},
   JOURNAL = {Matematiche (Catania)},
  FJOURNAL = {Le Matematiche},
    VOLUME = {72},
      YEAR = {2017},
    NUMBER = {2},
     PAGES = {87--98},
      ISSN = {0373-3505,2037-5298},
   MRCLASS = {14K12},
  MRNUMBER = {3731504},
MRREVIEWER = {Pawe\l\ Bor\'owka},
       DOI = {10.4418/2017.72.2.7},
       URL = {https://doi.org/10.4418/2017.72.2.7},
}

@article {Welters-twice,
    AUTHOR = {Welters, G. E.},
     TITLE = {Curves of twice the minimal class on principally polarized
              abelian varieties},
   JOURNAL = {Nederl. Akad. Wetensch. Indag. Math.},
  FJOURNAL = {Koninklijke Nederlandse Akademie van Wetenschappen.
              Indagationes Mathematicae},
    VOLUME = {49},
      YEAR = {1987},
    NUMBER = {1},
     PAGES = {87--109},
      ISSN = {0019-3577},
   MRCLASS = {14K05 (14H40)},
  MRNUMBER = {883371},
MRREVIEWER = {Ryuji\ Sasaki},
}

@article {kleiman,
    AUTHOR = {Kleiman, Steven L.},
     TITLE = {The transversality of a general translate},
   JOURNAL = {Compositio Math.},
  FJOURNAL = {Compositio Mathematica},
    VOLUME = {28},
      YEAR = {1974},
     PAGES = {287--297},
      ISSN = {0010-437X,1570-5846},
   MRCLASS = {14M15 (14N10)},
  MRNUMBER = {360616},
MRREVIEWER = {Joel\ Roberts},
}

@article {CLP,
    AUTHOR = {Ciliberto, Ciro and Lopes, Margarida Mendes and Pardini, Rita},
     TITLE = {Abelian varieties in {B}rill-{N}oether loci},
   JOURNAL = {Adv. Math.},
  FJOURNAL = {Advances in Mathematics},
    VOLUME = {257},
      YEAR = {2014},
     PAGES = {349--364},
      ISSN = {0001-8708,1090-2082},
   MRCLASS = {14H40 (14H51)},
  MRNUMBER = {3187653},
MRREVIEWER = {Sanjay\ Kumar\ Singh},
       DOI = {10.1016/j.aim.2014.02.024},
       URL = {https://doi.org/10.1016/j.aim.2014.02.024},
}

@article {debarrewd,
    AUTHOR = {Debarre, Olivier and Fahlaoui, Rachid},
     TITLE = {Abelian varieties in {$W^r_d(C)$} and points of bounded degree
              on algebraic curves},
   JOURNAL = {Compositio Math.},
  FJOURNAL = {Compositio Mathematica},
    VOLUME = {88},
      YEAR = {1993},
    NUMBER = {3},
     PAGES = {235--249},
      ISSN = {0010-437X,1570-5846},
   MRCLASS = {14H30 (11G30 14H25 14H40)},
  MRNUMBER = {1241949},
MRREVIEWER = {John\ B.\ Little},
       URL = {http://www.numdam.org/item?id=CM_1993__88_3_235_0},
}

@book {Ac,
    AUTHOR = {Accola, Robert D. M.},
     TITLE = {Topics in the theory of {R}iemann surfaces},
    SERIES = {Lecture Notes in Mathematics},
    VOLUME = {1595},
 PUBLISHER = {Springer-Verlag, Berlin},
      YEAR = {1994},
     PAGES = {x+105},
      ISBN = {3-540-58721-7},
   MRCLASS = {30F10 (14H55)},
  MRNUMBER = {1329541},
MRREVIEWER = {C.\ Maclachlan},
       DOI = {10.1007/BFb0073575},
       URL = {https://doi.org/10.1007/BFb0073575},
}

@article {pareschi-generation,
    AUTHOR = {Pareschi, Giuseppe},
     TITLE = {Generation and ampleness of coherent sheaves on abelian
              varieties, with application to {B}rill-{N}oether theory},
   JOURNAL = {Pure Appl. Math. Q.},
  FJOURNAL = {Pure and Applied Mathematics Quarterly},
    VOLUME = {20},
      YEAR = {2024},
    NUMBER = {5},
     PAGES = {2379--2413},
      ISSN = {1558-8599,1558-8602},
   MRCLASS = {14F08 (14H51)},
  MRNUMBER = {4829853},
MRREVIEWER = {Hugo\ Torres\ L\'{o}pez},
       DOI = {10.4310/pamq.241105233715},
       URL = {https://doi.org/10.4310/pamq.241105233715},
}

@book {Illusie,
    AUTHOR = {Illusie, Luc},
     TITLE = {Complexe cotangent et d\'{e}formations. {I}},
    SERIES = {Lecture Notes in Mathematics, Vol. 239},
 PUBLISHER = {Springer-Verlag, Berlin-New York},
      YEAR = {1971},
     PAGES = {xv+355},
   MRCLASS = {14B10 (14D15 14F20 55D20)},
  MRNUMBER = {491680},
MRREVIEWER = {Larry\ Breen},
}

@article{Faro-Spelta,
author = {Faro, Dario and Spelta, Irene},
title = {Gauss-Prym maps on Enriques surfaces},
journal = {Mathematische Nachrichten},
volume = {296},
number = {9},
pages = {4454-4462},
doi = {https://doi.org/10.1002/mana.202200287},
url = {https://onlinelibrary.wiley.com/doi/abs/10.1002/mana.202200287},
eprint = {https://onlinelibrary.wiley.com/doi/pdf/10.1002/mana.202200287},
year = {2023}
}

@article {analogs,
    AUTHOR = {Alvarado, Nelson and Pareschi, Giuseppe},
     TITLE = {Abelian varieties analogs of two results about algebraic
              curves},
   JOURNAL = {Math. Ann.},
  FJOURNAL = {Mathematische Annalen},
    VOLUME = {395},
      YEAR = {2026},
    NUMBER = {3},
     PAGES = {Paper No. 74, 28},
      ISSN = {0025-5831,1432-1807},
   MRCLASS = {14K05 (14H52)},
  MRNUMBER = {5081483},
       DOI = {10.1007/s00208-026-03505-6},
       URL = {https://doi.org/10.1007/s00208-026-03505-6},
}

@incollection {gru-survey,
    AUTHOR = {Grushevsky, Samuel},
     TITLE = {The {S}chottky problem},
 BOOKTITLE = {Current developments in algebraic geometry},
    SERIES = {Math. Sci. Res. Inst. Publ.},
    VOLUME = {59},
     PAGES = {129--164},
 PUBLISHER = {Cambridge Univ. Press, Cambridge},
      YEAR = {2012},
      ISBN = {978-0-521-76825-2},
   MRCLASS = {14H42 (14K05)},
  MRNUMBER = {2931868},
MRREVIEWER = {H.\ Lange},
}

@article {DPC,
    AUTHOR = {Schreieder, Stefan},
     TITLE = {Theta divisors with curve summands and the {S}chottky problem},
   JOURNAL = {Math. Ann.},
  FJOURNAL = {Mathematische Annalen},
    VOLUME = {365},
      YEAR = {2016},
    NUMBER = {3-4},
     PAGES = {1017--1039},
      ISSN = {0025-5831,1432-1807},
   MRCLASS = {14H42 (14H40 14K12 14K25)},
  MRNUMBER = {3521080},
MRREVIEWER = {H.\ Lange},
       DOI = {10.1007/s00208-015-1287-8},
       URL = {https://doi.org/10.1007/s00208-015-1287-8},
}

@article {PPo-castelnuovo,
    AUTHOR = {Pareschi, Giuseppe and Popa, Mihnea},
     TITLE = {Castelnuovo theory and the geometric {S}chottky problem},
   JOURNAL = {J. Reine Angew. Math.},
  FJOURNAL = {Journal f\"ur die Reine und Angewandte Mathematik. [Crelle's
              Journal]},
    VOLUME = {615},
      YEAR = {2008},
     PAGES = {25--44},
      ISSN = {0075-4102,1435-5345},
   MRCLASS = {14K05 (14H40)},
  MRNUMBER = {2384330},
MRREVIEWER = {H.\ Lange},
       DOI = {10.1515/CRELLE.2008.008},
       URL = {https://doi.org/10.1515/CRELLE.2008.008},
}

@article {pareschi-gauss,
    AUTHOR = {Pareschi, Giuseppe},
     TITLE = {Gaussian maps and multiplication maps on certain projective
              varieties},
   JOURNAL = {Compositio Math.},
  FJOURNAL = {Compositio Mathematica},
    VOLUME = {98},
      YEAR = {1995},
    NUMBER = {3},
     PAGES = {219--268},
      ISSN = {0010-437X,1570-5846},
   MRCLASS = {14H10 (14H60 14K99)},
  MRNUMBER = {1351829},
MRREVIEWER = {Angelo\ Lopez},
       URL = {http://www.numdam.org/item?id=CM_1995__98_3_219_0},
}

@incollection {mumford2,
    AUTHOR = {Mumford, David},
     TITLE = {Varieties defined by quadratic equations},
 BOOKTITLE = {Questions on {A}lgebraic {V}arieties ({C}.{I}.{M}.{E}., {III}
              {C}iclo, {V}arenna, 1969)},
    SERIES = {Centro Internazionale Matematico Estivo (C.I.M.E.)},
     PAGES = {29--100},
 PUBLISHER = {Ed. Cremonese, Rome},
      YEAR = {1970},
   MRCLASS = {14.01},
  MRNUMBER = {282975},
MRREVIEWER = {Y.\ Nakai},
}

@book{ottaviani,
  title={Variet{\`a} proiettive di codimensione piccola},
  author={Ottaviani, G.},
  isbn={9788879991193},
  series={Ist. nazion. di alta matematica F. Severi},
  url={https://books.google.cl/books?id=9xlVAAAACAAJ},
  year={1995},
  publisher={Aracne}
}

@article {lozovanu,
    AUTHOR = {Lozovanu, Victor},
     TITLE = {Seshadri constants of indecomposable polarized abelian
              varieties},
   JOURNAL = {J. Math. Soc. Japan},
  FJOURNAL = {Journal of the Mathematical Society of Japan},
    VOLUME = {78},
      YEAR = {2026},
    NUMBER = {2},
     PAGES = {343--364},
      ISSN = {0025-5645,1881-1167},
   MRCLASS = {14C20 (14B10 14K25 14M25)},
  MRNUMBER = {5063445},
       DOI = {10.2969/jmsj/94319431},
       URL = {https://doi.org/10.2969/jmsj/94319431},
}

\end{document}